\newcommand{\mres}{\mathbin{\vrule height 1.6ex depth 0pt width
		0.13ex\vrule height 0.13ex depth 0pt width 1.3ex}}
\newtheorem{theo}{Theorem}
\newtheorem{lem}{Lemma}[section]
\newtheorem{defi}[lem]{Definition}
\newtheorem{cor}[lem]{Corollary}
\newtheorem{prop}[lem]{Proposition}
\theoremstyle{remark}
\newtheorem{remark}[lem]{Remark}
\newcommand{\R}{\mathbb{R}}
\newcommand{\N}{\mathbb{N}}
\newcommand{\Mm}{\mathbb{P}^+}
\newcommand{\Mmm}{\mathbb{P}^{++}}
\newcommand{\Ma}{\mathbb{P}^1}
\newcommand{\Sm}{\mathcal{S}}
\newcommand{\Pm}{\mathcal{P}^{++}}
\newcommand{\Po}{{\mathcal{P}^+}}
\renewcommand{\o}{\overline}
\newcommand{\p}{\partial}
\numberwithin{equation}{section}
\DeclareMathOperator{\dive}{div}
\DeclareMathOperator{\grad}{grad}
\newcommand{\rhp}{G}
\newcommand{\rd}{\mathrm{d}}
\renewcommand{\u}{{\mathfrak{U}}}
\title[A new matrix distance]{On optimal transport of matrix-valued measures}
\author[Y.~Brenier]{Yann Brenier}
\address[Y.~Brenier]{D\'epartement de math\'ematiques et applications,
CNRS DMA (UMR 8553),
\'Ecole Normale Sup\'erieure,
45 rue d' Ulm,
75005 Paris,
France}
\email{yann.brenier@ens.fr}
\author[D.~Vorotnikov]{Dmitry Vorotnikov}
\address[D.~Vorotnikov]{CMUC, Department of
Mathematics, University of Coimbra, 3001-501 Coimbra, Portugal}{}
\email{mitvorot@mat.uc.pt}
\subjclass[2010]{28A33, 47A56, 49Q20, 51F99, 58B20}
\keywords{Monge-Kantorovich transport, Bures distance, geodesic space, metric cone}
\begin{document}
\maketitle
\begin{abstract}
We suggest a new way of defining optimal transport of positive-semidefinite matrix-valued measures. It is inspired by a recent rendering of the incompressible Euler equations and related conservative systems as concave maximization problems. The main object of our attention is the Kantorovich-Bures metric space, which is a matricial analogue of the Wasserstein and Hellinger-Kantorovich metric spaces. We establish some topological, metric and geometric properties of this space, which includes the existence of the optimal transportation path. 
\end{abstract}

%
\section{Introduction}

Positive-definite-matrix-valued densities arise in signal processing, geometry (Riemannian metrics) and other applications. Due to the success of the Monge-Kantorovich optimal transport theory, there have been recent attempts to introduce the matrix-valued optimal transport in a relevant way \cite{CGT18,CM14,CM17,CGT17,CGGT,CGT18A,PCS16,MM17,GMP16}. It is reasonable to try to achieve this goal via a dynamical formulation in line with \cite{BenamouBrenier00}. This requires  a kind of transport equation for matricial densities. The listed references either employ the Lindblad equaton \cite{GS} and related ideas, or have a static Monge-Kantorovich outlook.
Our approach is totally different, and we believe that it is promising for the applications because of its relative simplicity from the numerical perspective.

 The first author has recently observed  in \cite{B17eu} that the incompressible Euler equation can be recast as concave maximization problem. The method is actually applicable to various conservative PDEs, cf. \cite{B17eu,V19eu}. The procedure of \cite{B17eu,V19eu} naturally produces variational problems involving matricial densities. These problems are very similar to the dynamical optimal transport and to the mean-field games, and may serve a heuristic for constructing matricial optimal transport problems. The problem that we introduce in this paper is based on the transport-like operator \begin{equation}\label{symgr}- \left(\nabla q \right)^{Sym} \end{equation} that is related to the concave maximization rendering \cite{B17eu} of the incompressible Euler equation. Here $q$ is a suitable momentum-like field. One can generate \eqref{symgr} even more straigtforwardly by considering the Burgers-like problem \begin{equation}\label{burg} \partial_t v+\dive (v\otimes v)=0. \end{equation}  This is perhaps the most elementary vectorial PDE that fits into the framework of ``abstract Euler'' equations introduced in \cite{V19eu}. The corresponding concave maximization problem, cf.  \cite{V19eu}, may be formally written as \begin{equation} \label{e:absteu} \sup_{q,B}\int_{[0,T]\times \mathbb R^d} v_0\cdot q-\frac 1 4 G^{-1}q\cdot q\end{equation} where the vector fields $q$ and the positive-definite matrix fields $G$ are subject to the constraints
\begin{equation}\label{e:constr1}\p_t G_t=- \left(\nabla q_t \right)^{Sym},\end{equation} \begin{equation} G_T\equiv \frac 1 2 I.\end{equation}

However, the operator \eqref{symgr} that appears in \eqref{e:constr1} has a nontrivial cokernel, which means that we cannot join any two matrix-valued densities with a path directed by the tangents of the form \eqref{symgr}. This is somewhat similar to the impossibility of joining two measures of different mass in the classical Monge-Kantorovich transport. The latter issue can be fixed in the framework of the unbalanced optimal transport \cite{KMV16A,LMS16,CP18,LMS18,CPSV18,Rez15} by interpolating between the classical optimal transport and the  Hellinger (also known as Fisher-Rao) metric related to the information geometry \cite{Ay17,M15,KLMP}.  The matricial counterpart of the Hellinger metric is the Bures metric \cite{Bures,U92}. Notably, the corresponding Riemannian distance coincides with the Wasserstein distance between Gaussian measures \cite{BJL}. The Bures metric is usually defined for constant densities but can be naturally generalized to non-constant densities, cf. \cite{CGT18A}. Then we can interpolate between this quantum information metric and the matricial transport driven by \eqref{e:constr1}. This procedure generates an additional reactive term in the transport equation, and we can join any two positive-definite-matrix-valued measures by a suitable continuous path. The same correction term was recently used in \cite{CGT18A} for the Lindblad equation.
The resulting dynamical transportation problem generates a distance on the space of positive-definite-matrix-valued measures, which we call the Kantorovich-Bures distance. This distance is a matricial cognate of the Wasserstein distance \cite{villani03topics,S15} and of the recently introduced Hellinger-Kantorovich distance \cite{KMV16A,LMS16,CP18,LMS18,CPSV18}. The Kantorovich-Bures distance is frame-indifferent in the spirit of rational mechanics \cite{Tru}. The Kantorovich-Bures space is a geodesic metric space. It has a conic structure comparable to the one that was recently discovered \cite{LM17} for the Hellinger-Kantorovich space.  The Bures space of constant positive-definite matrices may be viewed as a totally geodesic submanifold in the Kantorovich-Bures space.

To finish the introduction, we would like to mention the recent works \cite{Forms1,Forms2,Forms3,Rez15,Forms4} aiming to launch a theory of optimal transport of differential forms. 

The paper is organized as follows. In the remaining part of the Introduction, we present basic notation and preliminary facts. In Section  \ref{section:metric_space}, we define the Bures-Kantorovich distance using a dynamical variational construction. In Section \ref{section:geodesic}, we explore some topological, metric and geometric properties of the Bures-Kantorovich metric space. In Section \ref{section:sphere}, we study the metric cone structure of the Bures-Kantorovich space. In the Appendices, we discuss the frame-indifference of the distance and formal Riemannian geometry of the Bures-Kantorovich space, and prove several technical lemmas.  

\subsection*{Notation and preliminaries}
\begin{itemize}
 \item 
 We will use the following basic notation: \begin{itemize}
 \item 
 $\R^{d\times d}
$
is the space of $d\times d$ matrices, equipped with the Frobenius product $\Phi:\Psi=Tr(\Phi\Psi^\top)$ and the norm $|\Phi|=\sqrt {\Phi:\Phi}$, \item $A^{Sym}:=\frac 1 2 (A+A^\top)$ will denote the symmetric part of $A\in \R^{d\times  d}$, \item $I\in \R^{d\times d}$ is the identity matrix,
\item $
\Sm
$ is the subspace of symmetric $d\times d$ matrices, \item
$
\Po
$ is the subspace of symmetric positive-semidefinite $d\times d$ matrices, \item
$
\Pm
$ is the subspace of symmetric positive-definite $d\times d$ matrices, \item
$
\mathcal P^1
$ is the subspace of symmetric positive-definite $d\times d$ matrices of unit trace, \item
%
$
\Mm
$ is the set of $\Po$-valued Radon measures $P$ on $\R^d$ with finite $Tr (\rd P(\R^d))$, \item
$
\Mmm
$ is the set of absolutely continuous (w.r.t. the Lebesgue measure $\mathcal L^d$) $\Pm$-valued Radon measures $P$ on $\R^d$ with finite $Tr (\rd P(\R^d))$,
\item $
\Ma
$ is the set of $\Po$-valued Radon measures $P$ on $\R^d$ with $Tr (\rd P(\R^d))$=1,
\item $Q:=[0,1]\times\R^d$.
\end{itemize}

\item
We will use the following simple inequalities
\begin{equation}\label{in:cs1}PA:A\leq Tr P|A|^2,\ Pq\cdot q\leq Tr P|q|^2,\ P\in \Po, A\in\R^{d\times d}, q\in \R^d.\end{equation}
\begin{equation}\label{in:cs2}A:B\geq 0,\ A,B\in \Po.\end{equation}
\item
We use the following notation for sets of functions (either scalar or matrix-valued):
\begin{center}
\begin{tabular}{ll}
$\mathcal{C}_b$: & bounded continuous with $\|\phi\|_{\infty}=\sup |\phi|$ \\ & (in the matrix-valued case the norm on the right is the Frobenius one); \\
$\mathcal{C}_b^1$: & bounded $\mathcal{C}^1$ with bounded first derivatives;\\
$\mathcal{C}^\infty_c$: & smooth compactly supported;\\
$\mathcal{C}_0$: & continuous and decaying at infinity;\\
$\operatorname{Lip}:$ & bounded and Lipschitz continuous with $\|\phi\|_{\operatorname{Lip}}=\|\nabla\phi\|_{\infty}+\|\phi\|_{\infty}$\\ & (here $\|\nabla\phi\|_{\infty}=\sup |\nabla\phi|$, where $|\cdot|$ is the operator norm in $\mathcal L(\R^d, \R^{d\times d})$\\ & for matrix-valued $\phi$).
\end{tabular}
\end{center}
\item
Given a sequence $\{\rhp^k\}_{k\in\N}\subset \Mm$ and $\rhp\in\Mm$ we say that:
\begin{enumerate}[(i)]
 \item $\rhp^k$ \emph{converges narrowly} to $\rhp$ if there holds
 $$
 \forall\,\phi\in \mathcal{C}_b(\R^d):\qquad \lim\limits_{k\to\infty}\int_{\R^d}\phi(x) \rd \rhp^k(x)=\int_{\R^d}\phi(x) \rd \rhp(x).
 $$
\item $\rhp^k$ \emph{converges weakly-$*$}  to $\rhp$ if there holds
 $$
 \forall\,\phi\in \mathcal{C}_0(\R^d):\qquad \lim\limits_{k\to\infty}\int_{\R^d}\phi(x) \rd \rhp^k(x)=\int_{\R^d}\phi(x) \rd \rhp(x).
 $$
\end{enumerate}

\item
For curves $t\in[0,1]\mapsto \rhp_t\in \Mm$ we write $\rhp\in\mathcal{C}_{w}([0,1];\Mm)$ for the continuity with respect to the narrow topology.
\item Clearly, $\Mm\subset(\mathcal{C}_0(\R^d;\mathcal S) )^*$. By approximating the constant matrix function identically equal to the unit matrix $I$ with compactly supported test functions, it is not difficult to prove  that \begin{equation} c\, Tr\,\rd\rhp(\R^d)\leq \|\rhp\|_{(\mathcal{C}_0(\R^d;\mathcal S) )^*}\leq C\, Tr\,\rd\rhp(\R^d) \label{masweak} \end{equation} for any $\rhp\in \Mm$, where the constants $c,C$ merely depend on $d$.

\item
Given a non-identically-zero measure $\rhp\in \Mm$ we will denote by $$L^2(\rd \rhp;\Sm\times \R^d)$$ the Hilbert space obtained by completion of the quotient by the seminorm kernel of the space  $\mathcal{C}^1_b(\R^d;\Sm\times \R^d)$ equipped with the Hilbert seminorm
$$
\|{\mathfrak U}\|_{L^2(\rd\rhp)}^2=\int_{\R^d}\rd \rhp(x)u\cdot u +\int_{\R^d} \rd \rhp(x)U(x):U(x).
$$ Here $\mathfrak U:=(U,u)$ stands for a generic element in $\mathcal{C}^1_b(\R^d;\Sm\times \R^d)$.

For brevity, we will simply write $L^2(\rd \rhp)$ instead of $L^2(\rd \rhp;\Sm\times \R^d)$. It is not difficult to see that the elements of $L^2(\rd \rhp)$ can be rendered as pairs ${\mathfrak U}=(U,u)\in L^2(\rd \rhp;\Sm)\times  L^2(\rd \rhp;\R^d)$, where the latter two spaces are defined in the conventional way (as, for instance, in \cite{DR97}).

\item 
In a similar fashion, 
given a narrowly continuous curve $\rhp\in \mathcal{C}_{w}([0,1];\Mm)$, we can define the space $L^2(0,1;L^2( \rd \rhp_t))$. The Hilbert norm in $L^2(0,1;L^2(\rd\rhp_t))$ is 
\begin{equation}
\|\u\|_{L^2(0,1;L^2(\rd\rhp_t))}^2=\int_0^1\left(\int_{\R^d}\rd \rhp_t(x)u_t (x)\cdot u_t(x) +\int_{\R^d} \rd \rhp_t(x)U_t(x):U_t(x)\right) \rd t.\label{e:lagr}
\end{equation}
\item The \emph{bounded-Lipschitz distance} ($BL$) between two matrix measures $\rhp_0,\rhp_1\in \Mm$ is
$$
d_{BL}(\rhp_0,\rhp_1)=\sup\limits_{\|\Phi\|_{\operatorname{Lip}}\leq 1}\left|\int_{\R^d}\Phi:(\rd \rhp_1-\rd \rhp_0)\right|.
$$
The distance $d_{BL}$ metrizes the narrow convergence on $\Mm$. A sketch of the proof in the case of matrix measures on an interval can be found in \cite{GN14}. In our situation the claim can still be shown by mimicking the proof strategy for the scalar-valued Radon measures \cite{B07,dud}. The key observation \cite{GN14} is that 
$\Sm$-valued bounded continuous functions can be approximated by monotone (in the sense of positive semi-definiteness) sequences of bounded Lipschitz ones.
We also point out is that the supremum can be restricted to smooth compactly supported functions. This follows from the tightness of a set consisting of two matricial measures of finite mass.
\item By \emph{geodesics} we always mean constant-speed, minimizing metric geodesics. 
\item $C$ is a generic positive constant.

\end{itemize}
%
\section{The Kantorovich-Bures distance}
The starting point for our considerations is
\label{section:metric_space}

\begin{defi}[Kantorovich-Bures distance] \label{d:metr}
Given two matrix measures $\rhp_0,\rhp_1\in \Mm$, we define
\begin{equation}\label{e:mini}
d_{KB}^2(\rhp_0,\rhp_1):=\inf_{\mathcal{A}(\rhp_0,\rhp_1)} \|\u\|^2_{L^2(0,1;L^2( \rd \rhp_t))},
\end{equation}
where the admissible set $\mathcal{A}(\rhp_0,\rhp_1)$ consists of all couples $(\rhp_t,\u_t)_{t\in [0,1]}$, ${\mathfrak U}_t=(U_t,u_t)$, such that
\begin{equation*}
\left\{ 
\begin{array}{l} 
\rhp\in\mathcal{C}_w([0,1];\Mm),\\
\rhp|_{t=0}=\rhp_0;\quad \rhp|_{t=1}=\rhp_1,\\
\u\in L^2(0,T;L^2(\rd\rhp_t)),\\
\p_t\rhp_t=\left\{-\nabla(\rhp_t u_t)+\rhp_tU_t\right\}^{Sym} \quad \mbox{in the weak sense, i.e.,}
\end{array}
\right.
\end{equation*}
\begin{multline} \label{consrt}
\int_{\R^d}\Phi_t:\rd \rhp_t-\int_{\R^d}\Phi_s:\rd \rhp_s-\int_s^t\int_{\R^d}(\rd\rhp_\tau:\p_\tau\Phi_\tau)\rd \tau\\
=
\int_s^t\int_{\R^d}(\rd\rhp_\tau u_\tau\cdot\dive\Phi_\tau+\rd\rhp_\tau U_\tau:\Phi_\tau)\rd \tau
\end{multline} for all test functions $\Phi\in \mathcal{C}^1_b(Q;\mathcal S)$ and $t,s\in [0,1].$
\end{defi}

We could have formally started from minimizing a more general  Lagrangian, namely, \begin{equation}\label{e:mini-1}
d_{KB}^2(\rhp_0,\rhp_1):=\inf_{\mathcal{B}(\rhp_0,\rhp_1)} \int_0^1\left(\int_{\R^d}\rhp_t^{-1}(x) q_t (x)\cdot q_t(x) +\rhp_t^{-1}(x)R_t(x):R_t(x)\,\rd x\right) \rd t,
\end{equation}
where the admissible set $\mathcal{B}(\rhp_0,\rhp_1)$ consists of  tuples $(\rhp_t,q_t,R_t)$, where $G_t(x)\in \Pm$, $q_t(x)\in \R^d$ and $R_t(x)\in \R^{d\times d}$, such that
\begin{equation*}
\left\{ 
\begin{array}{l} 
\rhp|_{t=0}=\rhp_0;\quad \rhp|_{t=1}=\rhp_1,\\
\p_t\rhp_t=\left\{-\nabla q_t+R_t\right\}^{Sym}.
\end{array}
\right. 
\end{equation*} This complies with \eqref{e:absteu}, \eqref{e:constr1} and the discussion in the Introduction. The reactive part is a generalization of the Bures metric, as will be evident in Remark \ref{constmatr}, see also Remark \ref{helling}. 

\begin{remark}In contrast with \eqref{e:absteu}, we opted for dropping the factor $1/4$ in the right-hand sides  of \eqref{e:mini} and \eqref{e:mini-1}, for a purely aesthetic reason, although this factor seems to be rather fundamental. Indeed, keeping it would halve the distance $d_{KB}$, which is in good agreement with Theorem \ref{th:cone} (ii). It would also eliminate the factor $4$ in Theorem \ref{prop:geodesic}, Proposition \ref{p:impr}, Corollary \ref{geodesictozero}, etc.\end{remark}

Perturbing an alleged minimizer of \eqref{e:mini-1} by adding $(\delta q,\delta R)$ for which $$L(\delta q,\delta R):=\left\{-\nabla (\delta q_t)+\delta R_t\right\}^{Sym}=0, (\delta q,\delta R)|_{t=0,1}=0,$$ we see that the minimizer formally satisfies $$\int_0^1\left(\int_{\R^d}\rhp_t^{-1}(x) q_t (x)\cdot \delta q_t(x) +\rhp_t^{-1}(x)R_t(x):\delta R_t(x)\,\rd x\right) \rd t=0, $$ for all pertubations $(\delta q,\delta R)$ from $Ker\, L$. This implies that such a minimizer can be written in the form $q=G\dive U$, $R=GU$, for some $U_t(x)\in\Sm$, hence \eqref{e:mini-1} yields \eqref{e:mini} via setting $u:=\dive U$. 
\begin{remark}[Transport of Hermitian matrices]    It seems that our results can be easily extended onto the case of matrix functions with complex entries; we opted for describing the real-valued case just for maintaining a more transparent connection with the classical Monge-Kantorovich transport. \end{remark}
\begin{remark}[Torus]  \label{tor} All the considerations of the paper are valid, mutatis mutandis, if the measures in question are defined on the flat torus $\mathbb T^d$ instead of $\mathbb R^d$.
\end{remark}
We shall prove shortly that
\begin{theo} 
 $d_{KB}$ is a distance on $\Mm$.
 \label{theo:d_distance}
\end{theo}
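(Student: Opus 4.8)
The plan is to verify the four defining properties of a metric, leaving essentially all the work for the implication $d_{KB}(\rhp_0,\rhp_1)=0\Rightarrow\rhp_0=\rhp_1$. Nonnegativity is built into \eqref{e:mini} as an infimum of squared Hilbert seminorms, and $d_{KB}(\rhp_0,\rhp_0)=0$ because the stationary couple $(\rhp_t\equiv\rhp_0,\ \u_t\equiv 0)$ belongs to $\mathcal A(\rhp_0,\rhp_0)$ with vanishing action (the weak formulation \eqref{consrt} reduces to the elementary identity $\int_{\R^d}(\Phi_t-\Phi_s):\rd\rhp_0=\int_s^t\int_{\R^d}\p_\tau\Phi_\tau:\rd\rhp_0\,\rd\tau$, true by Fubini and the fundamental theorem of calculus). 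To guarantee that $d_{KB}$ is finite — so that the triangle inequality below is not vacuous — I would connect arbitrary $\rhp_0,\rhp_1\in\Mm$ by a cheap explicit curve that first contracts $\rhp_0$ to the null measure and then expands $\rhp_1$: on $[0,\tfrac12]$ set $\rhp_t=(1-2t)^2\rhp_0$, $U_t=-\tfrac{4}{1-2t}I$, $u_t\equiv 0$, and on $[\tfrac12,1]$ the time-reversed version built from $\rhp_1$. Since $U_t$ is here a \emph{scalar} multiple of $I$, no inversion of $\rhp_t$ is involved and \eqref{consrt} is immediate; inserting this couple into \eqref{e:lagr} gives action $\le 8\bigl(Tr\,\rd\rhp_0(\R^d)+Tr\,\rd\rhp_1(\R^d)\bigr)<\infty$.

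Symmetry is soft: if $(\rhp_t,(U_t,u_t))\in\mathcal A(\rhp_0,\rhp_1)$ then $(\rhp_{1-t},(-U_{1-t},-u_{1-t}))$ satisfies all the requirements in $\mathcal A(\rhp_1,\rhp_0)$ — narrow continuity and the endpoint conditions are clear, and \eqref{consrt} is preserved under $\tau\mapsto 1-\tau$, $s\mapsto 1-s$, $t\mapsto 1-t$ — while the Lagrangian \eqref{e:lagr} is even in $\u$; hence $d_{KB}(\rhp_0,\rhp_1)=d_{KB}(\rhp_1,\rhp_0)$. The triangle inequality is the usual concatenation-and-reparametrization argument. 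Given $\eps>0$, I would pick $(\rhp^1,\u^1)\in\mathcal A(\rhp_0,\rhp_1)$ and $(\rhp^2,\u^2)\in\mathcal A(\rhp_1,\rhp_2)$ with actions $A_1,A_2$ at most $\eps$ above $d_{KB}^2(\rhp_0,\rhp_1)$, $d_{KB}^2(\rhp_1,\rhp_2)$, rescale time affinely so that the first runs on $[0,\lambda]$ and the second on $[\lambda,1]$ — the linearity of \eqref{consrt} in $\u$ forces the velocity on the first piece to carry the factor $1/\lambda$, so its action becomes $A_1/\lambda$ — and glue the two curves: the result is narrowly continuous because both pieces equal $\rhp_1$ at $t=\lambda$, and \eqref{consrt} persists by additivity of the time integral over $[0,\lambda]\cup[\lambda,1]$. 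The glued curve has action $A_1/\lambda+A_2/(1-\lambda)$; minimizing over $\lambda\in(0,1)$ yields $d_{KB}^2(\rhp_0,\rhp_2)\le(\sqrt{A_1}+\sqrt{A_2})^2$, and $\eps\to0$ finishes. (Alternatively one first checks, by reparametrizing to constant speed, that $d_{KB}$ equals the infimum of the length $\int_0^1\|\u_t\|_{L^2(\rd\rhp_t)}\,\rd t$, whose additivity under concatenation is transparent.)

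The real content is the implication $d_{KB}(\rhp_0,\rhp_1)=0\Rightarrow\rhp_0=\rhp_1$, for which I would first extract two a priori bounds along an arbitrary admissible couple. Testing \eqref{consrt} with the constant field $\Phi\equiv I$ (admissible, since it lies in $\mathcal C^1_b(Q;\Sm)$) and using matrix Cauchy--Schwarz together with \eqref{in:cs1} — here the symmetry of $U_\tau$ matters — yields $\bigl|\tfrac{\rd}{\rd t}Tr\,\rd\rhp_t(\R^d)\bigr|\le C\,(Tr\,\rd\rhp_t(\R^d))^{1/2}\bigl(\int_{\R^d}\rd\rhp_t\,U_t:U_t\bigr)^{1/2}$, whence by a Gr\"onwall argument applied to $(Tr\,\rd\rhp_t(\R^d))^{1/2}$ one gets $\sup_t Tr\,\rd\rhp_t(\R^d)\le M:=C\bigl(\sqrt{Tr\,\rd\rhp_0(\R^d)}+\|\u\|_{L^2(0,1;L^2(\rd\rhp_t))}\bigr)^2$. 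Next, for a time-independent $\Phi\in\mathcal C^1_b(\R^d;\Sm)$, taking $s=0$, $t=1$ in \eqref{consrt} and estimating the two right-hand terms by Cauchy--Schwarz in $x$ and then in $\tau$, again exploiting the symmetry of $U_\tau$ and $\Phi$ and invoking \eqref{in:cs1} and the mass bound $M$, gives
$$
\Bigl|\int_{\R^d}\Phi:(\rd\rhp_1-\rd\rhp_0)\Bigr|\ \le\ C\sqrt{M}\,\|\Phi\|_{\operatorname{Lip}}\,\|\u\|_{L^2(0,1;L^2(\rd\rhp_t))}.
$$
Applying this along a minimizing sequence $(\rhp^k,\u^k)$ with actions tending to $0$ — along which $M$ stays bounded by a multiple of $Tr\,\rd\rhp_0(\R^d)$ — forces $\int_{\R^d}\Phi:(\rd\rhp_1-\rd\rhp_0)=0$ for all such $\Phi$, i.e. $d_{BL}(\rhp_0,\rhp_1)=0$; since $d_{BL}$ metrizes the narrow convergence on $\Mm$, it separates points, and $\rhp_0=\rhp_1$.

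The step I expect to be the main obstacle is this last one: one has to run the matrix-measure Cauchy--Schwarz inequalities keeping the symmetric-part structure of the operator in \eqref{consrt} in view throughout, and — if one prefers to localize the test field $I$ rather than feed it in directly — to control the mass at spatial infinity by a truncation/tightness argument of the kind behind \eqref{masweak} and the monotone-Lipschitz-approximation remark in the preliminaries. Everything else (finiteness, symmetry, reparametrization, gluing) is routine once the admissible class and the Lagrangian of Definition \ref{d:metr} are unpacked.
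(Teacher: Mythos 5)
Your proposal is correct and follows essentially the same route as the paper: finiteness via the explicit contraction $(1-t)^2\rhp_0$ to the zero measure, symmetry by time reversal, the triangle inequality by concatenation with the optimal time split, and indiscernibility via an a priori mass bound combined with the Cauchy--Schwarz estimate on $\int_{\R^d}\Phi:(\rd\rhp_1-\rd\rhp_0)$ along a minimizing sequence (the paper's Lemma~\ref{lem:fundamental_dBL_estimate}). The only cosmetic difference is that you close the mass bound by a Gr\"onwall argument on $m_t^{1/2}$, whereas the paper evaluates the integrated $1/2$-H\"older estimate at the time where the mass attains its maximum; both are equivalent elementary devices.
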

\noindent We first need a preliminary technical bound:
\begin{lem}
Let $\rhp\in \mathcal{C}_w([0,1];\Mm)$ be a narrowly continuous curve, assume that the constraint \eqref{consrt} is satisfied for some potential $\u\in L^2(0,T;L^2(\rd\rhp_t))$ with finite energy
$$
E=E[\rhp;\u]=\|\u\|^2_{L^2(0,T;L^2(\rd\rhp_t))}
$$
and let $M:=2(\max\{m_0,m_1\}+E)$ with $m_i:=Tr\,\rd\rhp_i(\R^d)$.
Then the masses are bounded uniformly in time, $$m_t:=Tr\, \rd\rhp_t(\R^d)\leq M,$$ and
\begin{equation}
\label{eq:fundamental_dBL_estimate}
\forall\, \Phi\in \mathcal{C}^1_b(\R^d;\Sm):\qquad \left|\int_{\R^d}\Phi:(\rd\rhp_t-\rd\rhp_s)\right| \leq (\|\dive \Phi\|_{\infty}+\|\Phi\|_{\infty})\sqrt{ME}|t-s|^{1/2}
\end{equation}
for all $0\leq s\leq t\leq 1$.
\label{lem:fundamental_dBL_estimate}
\end{lem}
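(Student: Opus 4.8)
The plan is to use the weak formulation \eqref{consrt} directly together with the Cauchy-Schwarz-type inequalities \eqref{in:cs1}. First I would test \eqref{consrt} with a time-independent $\Phi\in\mathcal{C}^1_b(\R^d;\Sm)$, so that the term $\int_s^t\int_{\R^d}(\rd\rhp_\tau:\p_\tau\Phi_\tau)\,\rd\tau$ drops out and the identity reduces to
\begin{equation*}
\int_{\R^d}\Phi:(\rd\rhp_t-\rd\rhp_s)=\int_s^t\int_{\R^d}\bigl(\rd\rhp_\tau\,u_\tau\cdot\dive\Phi+\rd\rhp_\tau\,U_\tau:\Phi\bigr)\,\rd\tau.
\end{equation*}
Then I would estimate the right-hand side: by \eqref{in:cs1}, $\rhp_\tau u_\tau\cdot\dive\Phi\leq |\dive\Phi|\,(\rhp_\tau u_\tau\cdot u_\tau)^{1/2}(Tr\,\rhp_\tau)^{1/2}$ pointwise (Cauchy-Schwarz in the measure $\rd\rhp_\tau$, or more plainly $\rhp_\tau a\cdot b\le (\rhp_\tau a\cdot a)^{1/2}(\rhp_\tau b\cdot b)^{1/2}$ combined with $\rhp_\tau\, \dive\Phi\cdot\dive\Phi\le Tr\,\rhp_\tau\,|\dive\Phi|^2$), and similarly $\rhp_\tau U_\tau:\Phi\leq |\Phi|\,(\rhp_\tau U_\tau:U_\tau)^{1/2}(Tr\,\rhp_\tau)^{1/2}$. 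Integrating in space and applying Cauchy-Schwarz in $x$ gives a bound by $(\|\dive\Phi\|_\infty+\|\Phi\|_\infty)$ times $\int_s^t m_\tau^{1/2}\bigl(\int_{\R^d}\rd\rhp_\tau u_\tau\cdot u_\tau+\rd\rhp_\tau U_\tau:U_\tau\bigr)^{1/2}\rd\tau$, and then Cauchy-Schwarz in $\tau$ bounds this by $(\|\dive\Phi\|_\infty+\|\Phi\|_\infty)\bigl(\sup_\tau m_\tau\bigr)^{1/2}E^{1/2}|t-s|^{1/2}$.

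The remaining point — and the main obstacle — is to close the argument by bounding $\sup_\tau m_\tau$ by $M$, since the displayed estimate already contains $\sup_\tau m_\tau$. Here I would approximate the constant matrix field $I$ by test functions $\Phi^n\in\mathcal{C}^\infty_c(\R^d;\Sm)$ with $0\le\Phi^n\le I$, $\Phi^n\uparrow I$, $\|\Phi^n\|_\infty\le 1$ and $\|\dive\Phi^n\|_\infty\to 0$ (e.g. $\Phi^n(x)=\chi(x/n)I$ with $\chi$ a smooth cutoff, so $\dive\Phi^n=O(1/n)$). Testing the reduced identity with $s=0$, $t$ arbitrary, and passing to the limit in $n$ using monotone convergence on the left and the bound above on the right, one gets
\begin{equation*}
m_t\le m_0+\Bigl(\sup_{\tau\in[0,t]}m_\tau\Bigr)^{1/2}E^{1/2}
\end{equation*}
(the $\|\dive\Phi^n\|_\infty$ term vanishes in the limit, and the $t$-factor is $\le 1$). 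Denote $\mu_t:=\sup_{\tau\in[0,t]}m_\tau$; taking the sup over $\tau\in[0,t]$ on the left yields $\mu_t\le \max\{m_0,m_1\}+\mu_t^{1/2}E^{1/2}$ (using $m_\tau\le\max\{m_0,m_1\}+\mu_\tau^{1/2}E^{1/2}\le\max\{m_0,m_1\}+\mu_t^{1/2}E^{1/2}$ for $\tau\le t$). The quadratic inequality $\mu_t-\mu_t^{1/2}E^{1/2}-\max\{m_0,m_1\}\le 0$ in the variable $\mu_t^{1/2}$ gives $\mu_t^{1/2}\le\tfrac12\bigl(E^{1/2}+(E+4\max\{m_0,m_1\})^{1/2}\bigr)$, hence $\mu_t\le E+2\max\{m_0,m_1\}+ (E\max\{m_0,m_1\})^{1/2}\le 2(\max\{m_0,m_1\}+E)=M$ after a crude estimate (using $2\sqrt{ab}\le a+b$). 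Thus $m_t\le M$ for all $t$.

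Finally, substituting $\sup_\tau m_\tau\le M$ back into the displayed estimate of the first paragraph yields exactly \eqref{eq:fundamental_dBL_estimate}. I expect the only delicate points to be the justification of the limit $n\to\infty$ (controlled by narrow continuity, finite energy, and the fact that $\Phi^n$ are admissible test functions with $\dive\Phi^n\to 0$ uniformly) and keeping track of the elementary algebra in the quadratic inequality so that the constant comes out as the stated $M=2(\max\{m_0,m_1\}+E)$; a cruder constant would also be acceptable but the bookkeeping is routine.
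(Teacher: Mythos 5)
Your argument is essentially the paper's: test the weak constraint with a time-independent $\Phi$, apply Cauchy--Schwarz together with \eqref{in:cs1} to get the estimate with $\sup_\tau m_\tau$ in place of $M$, then bootstrap via a quadratic inequality in $\sqrt{\sup_\tau m_\tau}$ to replace that supremum by $M$. Two small points. First, your cutoff approximation of $I$ is unnecessary: the admissible test class in \eqref{consrt} is $\mathcal{C}^1_b(Q;\Sm)$, so $\Phi\equiv I$ may be inserted directly (the paper does exactly this), which also removes the need to justify the limit $n\to\infty$. Second, and more substantively, your closing inequality $\mu_t\le \max\{m_0,m_1\}+\mu_t^{1/2}E^{1/2}$ only yields a bound if one already knows $\mu_t<\infty$; otherwise it reads $\infty\le\infty$ and the quadratic argument is vacuous. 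This a priori finiteness is exactly what the paper extracts first from narrow continuity of $t\mapsto\rhp_t$ (so $t\mapsto m_t$ is continuous on the compact interval $[0,1]$, hence bounded with attained maximum); you should state this before running the bootstrap. With that observation inserted, and after correcting the minor arithmetic slip in the expansion (the quadratic inequality gives $\mu_t\le E+a+\sqrt{aE}$ with $a=\max\{m_0,m_1\}$, not $E+2a+\sqrt{aE}$, and the former is indeed $\le 2(a+E)=M$), your proof matches the paper's.
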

\begin{proof}
Since $G_t$ is a narrowly continuous matrix function, the masses $m_t=\int_{\R^d}\rd \rhp_t :I$ are uniformly bounded, and $m:=\max\limits_{t\in [0,t]} m_t$ is finite.
Applying a Cauchy-Schwarz-like argument to the weak constraint \eqref{consrt}, and employing \eqref{in:cs1}, we deduce
\begin{multline*}
 \left|\int_{\R^d}\Phi:(\rd \rhp_t-\rd \rhp_s)\right|
 = \left|\int_{s}^{t}\left(\int_{\R^d}\rd \rhp_\tau u_\tau \cdot\dive \Phi +\int_{\R^d} \rd \rhp_\tau U_\tau :\Phi \right) \rd \tau\right| \\
  \leq\left(\int_{s}^{t}\left(\int_{\R^d}\rd \rhp_\tau \dive \Phi \cdot\dive \Phi +\int_{\R^d} \rd \rhp_\tau\Phi:\Phi\right) \rd \tau\right)^{1/2}\\ \times \left(\int_{s}^{t}\left(\int_{\R^d}\rd \rhp_\tau u_\tau \cdot u_\tau +\int_{\R^d} \rd \rhp_\tau U_\tau :U_\tau \right) \rd \tau\right)^{1/2}\\ \leq 
  ( \|\dive \Phi\|_{\infty} + \|\Phi\|_{\infty}) \sqrt{m}\cdot |t-s|^{1/2}E^{1/2},
\end{multline*}
and it is enough to estimate $m\leq M= 2(\max\{m_0,m_1\}+E)$ as in our statement.
Choosing $\Phi\equiv I$, the previous estimate yields $|m_t-m_s|\leq \sqrt{mE}|t-s|^{1/2}$. Let $t_0\in [0,1]$ be any time when $m_{t_0}=m$: choosing $t=t_0$ and $s=0$, we immediately get $m\leq m_0+\sqrt{mE}|t_0-0|^{1/2}\leq m_0+\sqrt{mE}$, and some elementary algebra bounds $m\leq 2(m_0+E)$. Exchanging the roles of $\rhp_0,\rhp_1$, we obtain $m\leq 2(m_1+E)$, and finally $m\leq M$.
\end{proof}

\begin{proof}[Proof of Theorem~\ref{theo:d_distance}]
Let us first show that $d_{KB}(\rhp_0,\rhp_1)$ is always finite for any $\rhp_0,\rhp_1\in \Mm$.
Indeed for any $P_0\in \Mm$ it is easy to see that $P_t=(1-t)^2P_0$ and $\u_t=\left(-\frac{2}{1-t}I,0\right)$ give a narrowly continuous curve $t\mapsto P_t\in \Mm$ connecting $P_0$ to zero, and an easy computation shows that this path has finite energy $E=4m_0<\infty$ (this curve is actually the geodesic between $P_0$ and $0$, see Corollary~\ref{geodesictozero} below). Rescaling time, it is then easy to connect any two measures $\rhp_0,\rhp_1\in \Mm$ in time $t\in [0,1]$ by first connecting $\rhp_0$ to $0$ in time $t\in [0,1/2]$ and then connecting $0$ to $\rhp_1$ in time $t\in [1/2,1]$ with cost exactly $E=8(m_0+m_1)<\infty$.

In order to show that $d_{KB}$ is really a distance, observe first that the symmetry $d_{KB}(\rhp_0,\rhp_1)=d_{KB}(\rhp_1,\rhp_0)$ is obvious by definition.

For the indiscernability, assume that $\rhp_0,\rhp_1\in \Mm$ are such that $d_{KB}(\rhp_0,\rhp_1)=0$.
Let $\left(\rhp^k_t,\u^k_t\right)_{t\in [0,1]}$ be any minimizing sequence in \eqref{e:mini}, i.e., $\lim\limits_{k\to \infty}E[\rhp^k;\u^k]=d_{KB}^2(\rhp_0,\rhp_1)=0$.
By Lemma~\ref{lem:fundamental_dBL_estimate} we see that the masses $m^k_t=Tr\,\rd\rhp^k_t(\R^d)$ are uniformly bounded: $$\sup\limits_{t\in [0,1],\,k\in\N}m^k_t\leq M.$$
For any fixed $\Phi\in \mathcal{C}^\infty_c(\R^d;\Sm)$ the fundamental estimate \eqref{eq:fundamental_dBL_estimate} gives
\begin{align*} 
\left|\int_{\R^d}\Phi:(\rd\rhp_1-\rd\rhp_0)\right| \leq (\|\dive \Phi\|_{\infty}+\|\Phi\|_{\infty})\sqrt{ME[\rhp^k;\u^k]}.
\end{align*}
Since $\lim\limits_{k\to\infty} E[\rhp^k;\u^k]=0$ we conclude that $\int_{\R^d}\Phi:(\rd\rhp_1-\rd\rhp_0)$ for all $\Phi\in \mathcal{C}^\infty_c(\R^d;\Sm)$, thus $\rhp_1=\rhp_0$ as desired.

As for the triangular inequality, fix any $\rhp_0,\rhp_1,P\in \Mm$ and let us prove that $d_{KB}(\rhp_0,\rhp_1)\leq d_{KB}(\rhp_0,P)+d_{KB}(P,\rhp_1)$.
We can assume that all three distances are nonzero, otherwise the triangular inequality trivially holds by the previous point.
Let now $(\underline{\rhp}^k_t,\underline{\u}^k_t)_{t\in[0,1]}$ be a minimizing sequence in the definition of $d_{KB}^2(\rhp_0,P)=\lim\limits_{k\to\infty}E[\underline{\rhp}^k;\underline \u^k]$, and let similarly $(\overline{\rhp}^k_t,\overline{\u}^k_t)_{t\in[0,1]}$ be such that $d_{KB}^2(P,\rhp_1)=\lim\limits_{k\to\infty}E[\overline{\rhp}^k;\overline \u^k]$.
For fixed $\tau\in (0,1)$ let $\left(\rhp_t,\u_t\right)$
be the continuous path obtained by first following $\left(\underline{\rhp}^k,\frac{1}{\tau}\underline{\u}^k\right)$ from $\rhp_0$ to $P$ in time $\tau$, and then following $\left(\overline{\rhp}^k,\frac{1}{1-\tau}\o \u^k\right)$ from $P$ to $\rhp_1$ in time $1-\tau$.
Then $\left(\rhp^k_t,\u^k_t\right)_{t\in[0,1]}$ is an admissible path connecting $\rhp_0$ to $\rhp_1$, hence by definition of our distance and the explicit time scaling (cf. \cite{KMV16A}) we get that
$$
d_{KB}^2(\rhp_0,\rhp_1)\leq E[\rhp^k;\u^k]=\frac{1}{\tau}E[\o \rhp^k;\o \u^k]+\frac{1}{1-\tau}E[\underline \rhp^k;\underline \u^k].
$$
Letting $k\to\infty$ we obtain for any fixed $\tau\in (0,1)$
$$
d_{KB}^2(\rhp_0,\rhp_1)\leq \frac{1}{\tau}d_{KB}^2(\rhp_0,P)+\frac{1}{1-\tau}d_{KB}^2(P,\rhp_1).
$$
Finally choosing $\tau=\frac{d_{KB}(\rhp_0,P)}{d_{KB}(\rhp_0,P)+d_{KB}(P,\rhp_1)}\in (0,1)$ yields
$$
d_{KB}^2(\rhp_0,\rhp_1)\leq\frac{1}{\tau}d_{KB}^2(\rhp_0,P)+\frac{1}{1-\tau}d_{KB}^2(P,\rhp_1)=(d_{KB}(\rhp_0,P)+d_{KB}(P,\rhp_1))^2
$$
and the proof is complete.
\end{proof}

\begin{cor}
\label{corm} The elements of a bounded set in $(\Mm,d_{KB})$ have uniformly bounded mass. Conversely, subsets of $\Mm$ with uniformly bounded mass are bounded in $(\Mm,d_{KB})$.
\end{cor}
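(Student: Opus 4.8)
I want to prove both directions of Corollary \ref{corm}. The converse direction (uniformly bounded mass $\Rightarrow$ bounded in $(\Mm, d_{KB})$) is essentially already contained in the finiteness argument inside the proof of Theorem \ref{theo:d_distance}: there it was shown that one may connect any $\rhp_0$ to $\rhp_1$ through $0$ with energy exactly $8(m_0+m_1)$, so $d_{KB}^2(\rhp_0,\rhp_1)\leq 8(m_0+m_1)$. Hence if $\sup_k m_k \leq m_* < \infty$ on a subset, then $d_{KB}(\rhp,\rhp')\leq 4\sqrt{m_*}$ for all $\rhp,\rhp'$ in that set, so it has finite $d_{KB}$-diameter and is bounded. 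I would just invoke that computation.

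For the forward direction I fix a basepoint, say $0\in\Mm$ (the zero measure), and a bounded set $\mathcal{K}\subset(\Mm,d_{KB})$, so that $d_{KB}(\rhp,0)\leq D$ for all $\rhp\in\mathcal{K}$ and some $D<\infty$. Given $\rhp\in\mathcal{K}$, pick an admissible path $(\rhp_t,\u_t)$ from $0$ to $\rhp$ with energy $E\leq D^2+1$. By Lemma \ref{lem:fundamental_dBL_estimate}, applied to this path, the masses along the path stay bounded by $M = 2(\max\{m_0,m_1\}+E) = 2(\max\{0,\operatorname{Tr}\rd\rhp(\R^d)\}+E)$; taking $s=0$, $t=1$, $\Phi\equiv I$ in the mass estimate $|m_t-m_s|\leq\sqrt{ME}|t-s|^{1/2}$ gives $\operatorname{Tr}\rd\rhp(\R^d)=m_1\leq m_0+\sqrt{ME}=\sqrt{ME}$, i.e. $m_1^2\leq ME = 2(m_1+E)E$. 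This is a quadratic inequality in $m_1$; solving it yields $m_1\leq E+\sqrt{E^2+2E^2}=(1+\sqrt{3})E$, hence $\operatorname{Tr}\rd\rhp(\R^d)\leq(1+\sqrt3)(D^2+1)$, a bound depending only on $D$ (and $d$ implicitly, through nothing here). So the masses on $\mathcal{K}$ are uniformly bounded.

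**Main obstacle.** There is no serious obstacle; the only subtlety is the mild circularity of invoking Lemma \ref{lem:fundamental_dBL_estimate} with $M$ depending on $m_1=\operatorname{Tr}\rd\rhp(\R^d)$, the very quantity we are trying to bound — but this is resolved exactly as in the proof of that lemma, by the self-improving quadratic estimate: the inequality $m_1^2\leq 2(m_1+E)E$ closes on itself and returns $m_1\leq CE$ with an absolute constant $C$. The only other point worth a sentence is that $0\in\Mm$ and $d_{KB}$ is finite-valued (already established), so the basepoint and the choice of near-minimizing path are legitimate. Alternatively, and perhaps more cleanly, I could avoid the basepoint trick entirely: if $\rhp,\rhp'\in\mathcal{K}$ with $d_{KB}(\rhp,\rhp')\leq D'=\operatorname{diam}\mathcal{K}$, take a path between them of energy $\leq (D')^2+1$ and apply Lemma \ref{lem:fundamental_dBL_estimate} with $s=0,t=1,\Phi\equiv I$ to get $|\operatorname{Tr}\rd\rhp'(\R^d)-\operatorname{Tr}\rd\rhp(\R^d)|\leq\sqrt{ME}$; but since $M$ itself then depends on both masses one still needs one of them bounded a priori, so the basepoint-at-$0$ version above is the more economical presentation and is the one I would write up.
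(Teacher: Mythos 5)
Your proposal is correct and follows essentially the same route as the paper: the converse direction via the explicit cost of connecting through the zero measure computed in the proof of Theorem~\ref{theo:d_distance}, and the forward direction via the mass bound of Lemma~\ref{lem:fundamental_dBL_estimate} applied to near-minimizing paths from a basepoint (your quadratic self-improvement with basepoint $0$ is just an unpacking of the bound $m\leq 2(m_0+E)$ already established inside that lemma's proof). No gaps.
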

\begin{proof} The first statement is  an easy consequence of Lemma~\ref{lem:fundamental_dBL_estimate}. The converse one follows from the observation that the squared distance from any element $P_0\in\Mm$ to zero is controlled by $4 m(P_0)$, see the proof of Theorem \ref{theo:d_distance}. 
\end{proof} Another simple property is 
\begin{lem} \label{l:hk}
 If $(\rhp_t,\u_t)_{t\in [0,1]}$ is a narrowly continuous curve with total energy $E$, then $t\mapsto \rhp_t$ is $1/2$-H\"older continuous w.r.t. $d_{KB}$, and more precisely
 $$
 \forall\, t_0,t_1\in [0,1]:\qquad d_{KB}(\rhp_{t_0},\rhp_{t_1})\leq \sqrt{E}|t_0-t_1|^{1/2}.
 $$
\end{lem}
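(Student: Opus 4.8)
The plan is to prove Lemma \ref{l:hk} by a simple restriction-and-rescaling argument: given a narrowly continuous curve $(\rhp_t,\u_t)_{t\in[0,1]}$ of total energy $E$ satisfying the weak constraint \eqref{consrt}, and given $t_0,t_1\in[0,1]$ with (say) $t_0<t_1$, I want to produce an admissible competitor in $\mathcal A(\rhp_{t_0},\rhp_{t_1})$ whose energy is at most $E(t_1-t_0)$, and then invoke the definition of $d_{KB}$ in Definition \ref{d:metr}.

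The key steps are as follows. First, restrict the given curve to the subinterval $[t_0,t_1]$ and reparametrize it affinely onto $[0,1]$: set $\tilde\rhp_s := \rhp_{t_0+s(t_1-t_0)}$ for $s\in[0,1]$, and correspondingly $\tilde{\mathfrak U}_s := (t_1-t_0)\,\mathfrak U_{t_0+s(t_1-t_0)}$, i.e. $\tilde U_s=(t_1-t_0)U_{t_0+s(t_1-t_0)}$ and $\tilde u_s=(t_1-t_0)u_{t_0+s(t_1-t_0)}$. Second, check that $(\tilde\rhp_s,\tilde{\mathfrak U}_s)_{s\in[0,1]}$ lies in $\mathcal A(\rhp_{t_0},\rhp_{t_1})$: narrow continuity of $s\mapsto\tilde\rhp_s$ and the endpoint conditions $\tilde\rhp_0=\rhp_{t_0}$, $\tilde\rhp_1=\rhp_{t_1}$ are immediate, and the $L^2(\rd\tilde\rhp_s)$-integrability of $\tilde{\mathfrak U}$ follows from the computation in the next step; the weak constraint \eqref{consrt} for the rescaled pair is obtained by writing \eqref{consrt} for the original curve between times $t_0+s'(t_1-t_0)$ and $t_0+s(t_1-t_0)$ and changing variables $\tau=t_0+\sigma(t_1-t_0)$ in the time integrals, the factor $(t_1-t_0)$ from $\rd\tau=(t_1-t_0)\rd\sigma$ being exactly absorbed into the definition of $\tilde{\mathfrak U}$ (and the test functions $\Phi$ on $Q$ are reparametrized accordingly). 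Third, compute the energy of the competitor: by the same change of variables,
\begin{equation*}
\|\tilde{\mathfrak U}\|^2_{L^2(0,1;L^2(\rd\tilde\rhp_s))}=\int_0^1\!\!\int_{\R^d}(t_1-t_0)^2\big(\tilde u\cdot\tilde u/(t_1-t_0)^2+\cdots\big)\,\rd\tilde\rhp_s\,\rd s=(t_1-t_0)\int_{t_0}^{t_1}\!\!\int_{\R^d}\big(u_\tau\cdot u_\tau+U_\tau:U_\tau\big)\,\rd\rhp_\tau\,\rd\tau,
\end{equation*}
which is at most $(t_1-t_0)E$. Finally, by Definition \ref{d:metr}, $d_{KB}^2(\rhp_{t_0},\rhp_{t_1})\leq\|\tilde{\mathfrak U}\|^2_{L^2(0,1;L^2(\rd\tilde\rhp_s))}\leq (t_1-t_0)E=|t_0-t_1|E$, giving the claimed bound after taking square roots; the case $t_0>t_1$ follows by symmetry and the case $t_0=t_1$ is trivial.

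I do not expect a serious obstacle here — this is the standard time-rescaling computation familiar from the Benamou–Brenier and Hellinger–Kantorovich settings (as the paper itself signals by the parenthetical reference to \cite{KMV16A} in the triangle-inequality proof). The only point requiring a little care is the bookkeeping that the rescaled pair genuinely satisfies the weak formulation \eqref{consrt} for \emph{all} test functions $\Phi\in\mathcal C^1_b(Q;\mathcal S)$ and all sub-times, rather than just at the endpoints; this is where one must be slightly careful about reparametrizing the time dependence of $\Phi$ and about the fact that $\mathcal C^1_b$ functions of $s$ pull back to $\mathcal C^1_b$ functions of $\tau$ on the subinterval. One also uses implicitly that, since the curve has finite energy, Lemma \ref{lem:fundamental_dBL_estimate} guarantees the masses stay bounded, so the integrals above are all finite and the manipulations are legitimate.
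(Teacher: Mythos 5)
Your argument is correct and is exactly the paper's proof: the authors also restrict to $[t_0,t_1]$, reparametrize via $t(s)=t_0+(t_1-t_0)s$ with the potential scaled by $(t_1-t_0)$, and observe that the energy of the competitor is at most $E|t_1-t_0|$, which bounds $d_{KB}^2(\rhp_{t_0},\rhp_{t_1})$. Your write-up just spells out the change-of-variables and admissibility bookkeeping that the paper leaves implicit.
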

\begin{proof}
Rescaling in time and connecting $\rhp_{t_0}$ to $\rhp_{t_1}$ by the path $$(\o\rhp_s;\o \u_s):=(\rhp_{t(s)},(t_1-t_0)\u_{t(s)}), \quad s\in [0,1],$$ with $t(s)=t_0+(t_1-t_0)s$, it is easy to see that the resulting energy scales as $$E|t_0-t_1| \geq E[\o\rhp;\o \u]\geq d_{KB}^2(\rhp_{t_0},\rhp_{t_1}).$$
\end{proof}

\begin{remark} \label{helling} In Definition \ref{d:metr} it is possible  to restrict ourselves to the admissible paths that satisfy the additional constraint $u\equiv 0$. This leads to another distance $d_H$ on $\Mm$, which is a matricial analogue of the Hellinger distance.  All the results of this paper remain true for $d_H$, and the proofs are literally the same. However, this distance is expected to be topologically stronger than $d_{KB}$ (see, however, Remark \ref{remtop}), which might be less relevant in applications. \end{remark}

\section{Properties of the distance and existence of geodesics} \label{section:geodesic}
We begin with some topological properties of the Kantorovich-Bures space. 
\begin{theo}[Comparison with narrow convergence]
\label{theo:equiv}
 The convergence of matrix measures w.r.t. the distance $d_{KB}$ implies narrow convergence, and any Cauchy sequence in $(\Mm,d_{KB})$ is Cauchy in $(\Mm,d_{BL})$.
Moreover, for any pair $\rhp_0,\rhp_1\in \Mm$ with masses $m_0,m_1$ there holds
\begin{equation}
 \label{eq:dBL_loc_equivalent_d}
 d_{BL}(\rhp_0,\rhp_1) \leq C_d\sqrt{(m_0+m_1)}d_{KB}(\rhp_0,\rhp_1)
\end{equation}
with some uniform $C_d$ depending only on the dimension. 
\end{theo}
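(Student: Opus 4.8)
The plan is to derive the bounded-Lipschitz bound \eqref{eq:dBL_loc_equivalent_d} directly from the fundamental estimate of Lemma~\ref{lem:fundamental_dBL_estimate}, and then to read off the two topological statements as easy consequences. Fix $\rhp_0,\rhp_1\in\Mm$ with masses $m_0,m_1$, and let $(\rhp_t,\u_t)_{t\in[0,1]}$ be any admissible path in $\mathcal A(\rhp_0,\rhp_1)$ with energy $E=E[\rhp;\u]$. Set $M=2(\max\{m_0,m_1\}+E)$ as in Lemma~\ref{lem:fundamental_dBL_estimate}. Applying \eqref{eq:fundamental_dBL_estimate} with $s=0$, $t=1$ gives, for every $\Phi\in\mathcal C^1_b(\R^d;\Sm)$ with $\|\Phi\|_{\operatorname{Lip}}=\|\dive\Phi\|_\infty+\|\Phi\|_\infty\le 1$,
$$
\left|\int_{\R^d}\Phi:(\rd\rhp_1-\rd\rhp_0)\right|\le \sqrt{ME}=\sqrt{2(\max\{m_0,m_1\}+E)\,E}.
$$
Taking the supremum over such $\Phi$ and recalling (from the Notation section) that the $d_{BL}$ supremum may be restricted to smooth compactly supported functions, we obtain $d_{BL}(\rhp_0,\rhp_1)\le\sqrt{2(\max\{m_0,m_1\}+E)E}$.

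Next I would optimize over admissible paths. The right-hand side above is increasing in $E$, so it is tempting to simply let $E\to d_{KB}^2(\rhp_0,\rhp_1)$; the only subtlety is that we have not proved the infimum in \eqref{e:mini} is attained, so one works along a minimizing sequence $(\rhp^k,\u^k)$ with $E[\rhp^k;\u^k]\to d_{KB}^2(\rhp_0,\rhp_1)=:D^2$. For each $k$ the masses of the endpoints are still $m_0,m_1$, so passing to the limit yields
$$
d_{BL}(\rhp_0,\rhp_1)\le\sqrt{2(\max\{m_0,m_1\}+D^2)\,D^2}.
$$
This is not yet of the stated form, because of the $\max\{m_0,m_1\}$ term inside the square root rather than a clean $\sqrt{m_0+m_1}\,D$. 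To remedy this I would split into two regimes. If $D^2\le m_0+m_1$ (say), then $\max\{m_0,m_1\}+D^2\le 2(m_0+m_1)$, hence $d_{BL}\le 2\sqrt{m_0+m_1}\,D$. If instead $D^2> m_0+m_1$, then $\max\{m_0,m_1\}+D^2\le 2D^2$, so $d_{BL}\le\sqrt{2}\,D^2$; but one checks independently that $d_{BL}(\rhp_0,\rhp_1)\le\|\rhp_0\|+\|\rhp_1\|\le C(m_0+m_1)$ using \eqref{masweak} with Lipschitz test functions of norm $\le1$, and combining this crude bound with $m_0+m_1< D^2$ gives $d_{BL}\le C(m_0+m_1)\le C\sqrt{m_0+m_1}\,D$. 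In both regimes we get \eqref{eq:dBL_loc_equivalent_d} with a dimensional constant $C_d$; I would then record the cleanest constant that the two cases jointly allow.

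Finally, the two topological assertions follow immediately. If $\rhp^k\to\rhp$ in $d_{KB}$, then by Corollary~\ref{corm} the masses $m^k=Tr\,\rd\rhp^k(\R^d)$ are uniformly bounded, say by $m$, and $m(\rhp)\le m$ as well; applying \eqref{eq:dBL_loc_equivalent_d} to the pair $(\rhp^k,\rhp)$ gives $d_{BL}(\rhp^k,\rhp)\le C_d\sqrt{2m}\,d_{KB}(\rhp^k,\rhp)\to 0$, and since $d_{BL}$ metrizes narrow convergence on $\Mm$, we get narrow convergence. For a $d_{KB}$-Cauchy sequence the same Corollary~\ref{corm} gives a uniform mass bound $m$, and \eqref{eq:dBL_loc_equivalent_d} applied to the pairs $(\rhp^k,\rhp^\ell)$ shows $d_{BL}(\rhp^k,\rhp^\ell)\le C_d\sqrt{2m}\,d_{KB}(\rhp^k,\rhp^\ell)$, so the sequence is $d_{BL}$-Cauchy. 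The main obstacle is purely bookkeeping: extracting the clean $\sqrt{m_0+m_1}$ dependence from the $\sqrt{ME}$ estimate, where $M$ itself depends on $E$; the case split above is the way I would handle it, and nothing deeper than Lemma~\ref{lem:fundamental_dBL_estimate} and Corollary~\ref{corm} is needed.
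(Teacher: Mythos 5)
Your proposal is correct and follows the paper's proof almost verbatim in its overall structure: apply the fundamental estimate \eqref{eq:fundamental_dBL_estimate} of Lemma~\ref{lem:fundamental_dBL_estimate} with $s=0$, $t=1$, take the supremum over Lipschitz test functions, pass to a minimizing sequence so that $E\to d_{KB}^2(\rhp_0,\rhp_1)$, and then deduce the two topological claims from \eqref{eq:dBL_loc_equivalent_d} together with the uniform mass bound of Corollary~\ref{corm} and the fact that $d_{BL}$ metrizes narrow convergence. The one place where you genuinely diverge is the final bookkeeping step that converts $\sqrt{2(\max\{m_0,m_1\}+D^2)}\,D$ into $C_d\sqrt{m_0+m_1}\,D$: the paper simply invokes the a priori bound $d_{KB}^2(\rhp_0,\rhp_1)\le 8(m_0+m_1)$ (established in the proof of Theorem~\ref{theo:d_distance} and recorded as \eqref{8ub}), which makes $\max\{m_0,m_1\}+D^2\le 9(m_0+m_1)$ in one line; you instead run a two-regime case split, using in the regime $D^2>m_0+m_1$ the crude estimate $d_{BL}(\rhp_0,\rhp_1)\le C(m_0+m_1)$ from \eqref{masweak}. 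Both are valid; the paper's route is shorter, while yours is self-contained and does not rely on the earlier through-the-apex construction. One small imprecision: the paper defines $\|\Phi\|_{\operatorname{Lip}}=\|\nabla\Phi\|_\infty+\|\Phi\|_\infty$, not $\|\dive\Phi\|_\infty+\|\Phi\|_\infty$, so your first display should carry the dimensional constant of \eqref{ediv} rather than be an exact bound by $\sqrt{ME}$; this is harmless since the statement only claims a constant $C_d$.
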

\begin{proof} 
Fix $\rhp_0,\rhp_1$, and let $(\rhp_t,\u_t)$ be any admissible path from $\rhp_0$ to $\rhp_1$ with finite energy $E$.
Taking the supremum over $\Phi$ with $ \|\Phi\|_{\operatorname{Lip}}\leq 1$ in \eqref{eq:fundamental_dBL_estimate}, and observing that \begin{equation}\|\Phi\|_{\infty}+\|\dive \Phi\|_{\infty}\leq C \|\Phi\|_{\operatorname{Lip}},\label{ediv}\end{equation} where $C$ may merely depend on $d$, we deduce that $$d_{BL}(\rhp_0,\rhp_1)\leq C \sqrt{ME},$$
with $M=2(\max\{m_0,m_1\}+E)$ as in Lemma~\ref{lem:fundamental_dBL_estimate}.
Choosing now a minimizing sequence instead of an arbitrary path, and taking the limit we essentially obtain the same estimate with $E=\lim E[\rhp^k;\u^k]=d_{KB}^2(\rhp_0,\rhp_1)$. More precisely,
$$ 
d_{BL}(\rhp_0,\rhp_1)\leq C \sqrt{2(\max\{m_0,m_1\}+d_{KB}^2(\rhp_0,\rhp_1))}d_{KB}(\rhp_0,\rhp_1).
$$
Remembering the upper bound for $d_{KB}(\rhp_0,\rhp_1)$ in terms of the masses $m_0, m_1$ (cf.  Corollary~\ref{corm} or, explicitly, inequality \eqref{8ub}), we end up with \eqref{eq:dBL_loc_equivalent_d}.

Let $\rhp^k$ be a Cauchy sequence in $(\Mm,d_{KB})$ with mass $m^k=Tr\,\rd\rhp^k(\R^d)$. Since Cauchy sequences are bounded, we control the masses $m^k$ uniformly in $k$, thus from \eqref{eq:dBL_loc_equivalent_d} we see that
$$
d_{BL}(\rhp^p,\rhp^q)\leq  Cd_{KB}(\rhp^p,\rhp^q).
$$
Therefore, $\rhp^k$ is $d_{BL}$-Cauchy. Similarly, if a sequence is $d_{KB}$-converging, it is $d_{BL}$- and hence narrowly converging (to the same limit). 
\end{proof} 

\begin{remark} \label{remtop} The exact characterization of the topology of the Kantorovich-Bures space is an open problem. We also do not know whether this space is a complete metric space.
	\end{remark}

\begin{defi} Let $(X,\varrho)$ be a metric space, $\sigma$ be a Hausdorff topology on $X$. We say that the distance $\varrho$ is sequentially lower semicontinuous with respect to $\sigma$ if
for all $\sigma$-converging sequences
$x_k\overset{\sigma}{\rightarrow}x$, $y_k\overset{\sigma}{\rightarrow}y$
one has \begin{equation*}\label{e:lsq1}\varrho(x,y)\leq \liminf_{k\to \infty} \varrho(x_k,y_k).\end{equation*} \end{defi}

\begin{theo}[Lower-semicontinuity] \label{theo:d_LSC_weak*} The distance $d_{KB}$ is sequentially lower semicontinuous with respect to the weak-$*$ topology on $\Mm$.
\end{theo}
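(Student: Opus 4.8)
The plan is a direct compactness-plus-lower-semicontinuity argument, carried out in the measure-theoretic (``momentum--reaction'') form of the admissible set. Suppose $\rhp^k_0\to\rhp_0$ and $\rhp^k_1\to\rhp_1$ weakly-$*$ in $\Mm$; we must show $d_{KB}(\rhp_0,\rhp_1)\le\liminf_k d_{KB}(\rhp^k_0,\rhp^k_1)$. After passing to a subsequence we may assume the $\liminf$ is attained as a limit $\ell$, and we may assume $\ell<\infty$. By the uniform boundedness principle the weakly-$*$ convergent sequences $\{\rhp^k_i\}$ are norm-bounded in $(\mathcal{C}_0(\R^d;\Sm))^*$, so by \eqref{masweak} the masses $m^k_i$ are bounded uniformly in $k$. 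For each $k$ fix an almost-optimal path $(\rhp^k_t,\u^k_t)_{t\in[0,1]}\in\mathcal{A}(\rhp^k_0,\rhp^k_1)$ with $E^k:=E[\rhp^k;\u^k]\le d_{KB}^2(\rhp^k_0,\rhp^k_1)+1/k$; then $\sup_kE^k<\infty$, and Lemma~\ref{lem:fundamental_dBL_estimate} bounds the masses along the whole curve, $\sup_{k,t}m^k_t\le M<\infty$. On $Q=[0,1]\times\R^d$ introduce the measures $\Pi^k:=\rd t\otimes\rd\rhp^k_t$ ($\Sm$-valued), $J^k:=\rd t\otimes(\rhp^k_t u^k_t)$ ($\R^d$-valued) and $K^k:=\rd t\otimes(\rhp^k_t U^k_t)$ ($\R^{d\times d}$-valued); by \eqref{in:cs1} and Cauchy--Schwarz their total masses are controlled by $M$ and $E^k$, hence bounded uniformly in $k$, and along a further subsequence $\Pi^k\to\Pi$, $J^k\to J$, $K^k\to K$ weakly-$*$ on $Q$.

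First I would recover the limit curve. For each fixed $\Phi\in\mathcal{C}^1_c(\R^d;\Sm)$ the real functions $t\mapsto\int_{\R^d}\Phi:\rd\rhp^k_t$ are uniformly bounded and, by \eqref{eq:fundamental_dBL_estimate}, uniformly $1/2$-H\"older in $t$; by Arzel\`a--Ascoli together with a diagonal extraction over a countable family of such $\Phi$ dense in $\mathcal{C}_0(\R^d;\Sm)$, a subsequence produces a weakly-$*$ continuous curve $t\mapsto\rhp_t$ with $\rhp^k_t\to\rhp_t$ weakly-$*$ for \emph{every} $t\in[0,1]$; positive semidefiniteness and the bound $m_t\le M$ pass to the limit, so $\rhp_t\in\Mm$, $\rhp|_{t=0}=\rhp_0$, $\rhp|_{t=1}=\rhp_1$, and $\Pi=\rd t\otimes\rd\rhp_t$. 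Passing to the limit in \eqref{eq:fundamental_dBL_estimate} shows $t\mapsto\rhp_t$ is $1/2$-H\"older for $d_{BL}$, hence narrowly continuous, so $\rhp\in\mathcal{C}_w([0,1];\Mm)$. Since the constraint \eqref{consrt} depends linearly on $\rhp_t$ and on the fluxes $\rhp_t u_t,\rhp_t U_t$, every term being a pairing against a fixed test function, and since $\Pi$, $J$, $K$ carry no mass on time-slices, $(\rhp,J,K)$ satisfies the continuity equation $\p_t\rhp_t=\{-\dive J+K\}^{Sym}$ in the same weak sense.

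The last and decisive step is passing to the limit in the energy. Let $f:\Po\times\R^d\times\R^{d\times d}\to[0,+\infty]$ be the convex, positively $1$-homogeneous, lower semicontinuous function given by $f(P,a,B)=P^{-1}a\cdot a+P^{-1}B:B$ for $P\in\Pm$, $f(0,0,0)=0$ and $f=+\infty$ otherwise; using $P^{-1}a\cdot a=\sup_{b\in\R^d}\{2a\cdot b-P:(b\otimes b)\}$ and $P^{-1}B:B=\sup_{C\in\R^{d\times d}}\{2B:C-P:(CC^\top)\}$, it is represented as a supremum of functions that are affine in $(P,a,B)$ with continuous coefficients. By the Bouchitt\'e--Buttazzo theory of convex functionals of measures (the matricial version of the Benamou--Brenier lemma), the functional $(\mu,\nu_1,\nu_2)\mapsto\int_Q f\!\left(\frac{\rd\mu}{\rd\lambda},\frac{\rd\nu_1}{\rd\lambda},\frac{\rd\nu_2}{\rd\lambda}\right)\rd\lambda$, defined via the Lebesgue decomposition against a common reference measure $\lambda$ (the singular parts contributing $+\infty$ unless they vanish, by superlinearity of $f$ in $(a,B)$), is sequentially weakly-$*$ lower semicontinuous; hence
\[
\liminf_{k\to\infty}E^k=\liminf_{k\to\infty}\int_Q f(\rd\Pi^k,\rd J^k,\rd K^k)\ \ge\ \int_Q f(\rd\Pi,\rd J,\rd K).
\]
Finiteness of the right-hand side forces $J,K\ll\Pi$; disintegrating in time recovers densities $u_t,U_t$ with $J=\rd t\otimes(\rhp_t u_t)$, $K=\rd t\otimes(\rhp_t U_t)$ and $\u:=(U,u)\in L^2(0,1;L^2(\rd\rhp_t))$ with energy exactly the integral above. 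Therefore $(\rhp_t,\u_t)\in\mathcal{A}(\rhp_0,\rhp_1)$ and
$$d_{KB}^2(\rhp_0,\rhp_1)\ \le\ E[\rhp;\u]\ \le\ \liminf_{k\to\infty}E^k\ \le\ \liminf_{k\to\infty}\big(d_{KB}^2(\rhp^k_0,\rhp^k_1)+\tfrac1k\big)=\ell^2,$$
which gives the claim. I expect the main obstacle to be precisely this lower-semicontinuity step: one must set up the matricial Benamou--Brenier integrand carefully -- reconciling the full reaction matrix $K$ penalized by the energy with the symmetrized term $\{\,\cdot\,\}^{Sym}$ entering the constraint, and reconstructing $\u$ from the limiting space-time measures -- whereas the compactness of the curves is routine once one notices that the ordinary Arzel\`a--Ascoli theorem applied to $t\mapsto\int_{\R^d}\Phi:\rd\rhp^k_t$ replaces Prokhorov's theorem, which is unavailable here since the masses need not be tight.
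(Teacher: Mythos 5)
Your argument is correct in substance, and the overall scaffolding coincides with the paper's: almost-optimal paths with $E^k\le d_{KB}^2(\rhp^k_0,\rhp^k_1)+k^{-1}$, uniform mass bounds from Lemma \ref{lem:fundamental_dBL_estimate}, pointwise-in-$t$ weak-$*$ extraction of a narrowly continuous limit curve by an Arzel\`a--Ascoli argument (packaged in the paper as Lemma \ref{L:aa}), and linear passage to the limit in the constraint \eqref{consrt}. Where you genuinely diverge is the decisive energy step. You pass to the space--time momentum/reaction measures $J^k=\rd t\otimes(\rhp^k_t u^k_t)$, $K^k=\rd t\otimes(\rhp^k_t U^k_t)$ and invoke the Benamou--Brenier/Bouchitt\'e--Buttazzo lower semicontinuity of the convex, $1$-homogeneous integrand $f(P,a,B)=P^{-1}a\cdot a+P^{-1}B:B$, then reconstruct $\u$ by Radon--Nikodym differentiation. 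The paper avoids the Lebesgue decomposition entirely: it applies the refined Banach--Alaoglu result (Proposition \ref{Ban}) to the linear forms $\varphi_k(\Phi,\phi)=\int\rd\mu^k u^k\cdot\phi+\rd\mu^k U^k:\Phi$ on a fixed separable space of test fields carrying the varying seminorms $\|\cdot\|_k=\|\cdot\|_{L^2(\rd\mu^k)}$, and identifies the weak-$*$ limit functional with an element of $L^2(0,1;L^2(\rd\rhp_t))$ by Riesz representation, the inequality $\|\varphi_0\|\le\liminf c_k$ being exactly the energy lower semicontinuity. Your route is the more classical one and makes the momenta explicit, but it obliges you to handle three points that the paper's abstraction sidesteps: (i) the correct extension of $f$ to degenerate $P\in\Po\setminus\Pm$ --- your pointwise prescription ``$+\infty$ unless $(0,0,0)$'' is not the lower semicontinuous envelope; the sup-of-affine representation is the right object and yields $f(P,Pu,PU)=Pu\cdot u+PU:U$ even for singular $P$; (ii) the identification of the reconstructed densities with elements of the abstractly-completed space $L^2(\rd\rhp)$; and (iii) the symmetrization mismatch you yourself flag: the limiting density of $K$ need not equal $\rhp_t U_t$ with $U_t\in\Sm$, and one must project onto $\Sm$ in the $\rhp_t$-weighted inner product (precisely the projection $\Pi_{k,t}$ used in Step~3 of the proof of Theorem \ref{prop:geodesic}), which preserves the constraint --- it only sees $K:\Phi$ for symmetric $\Phi$ --- and does not increase the energy. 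All three are resolvable, so your proof goes through; the paper's functional-analytic route trades them for the single abstract identification of $(X,\|\cdot\|_0)^*$ with $L^2(0,1;L^2(\rd\rhp_t))$.
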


\begin{proof}
Consider any two converging sequences
$$
\rhp_0^k\underset{k\to\infty}{\rightarrow}\rhp_0,\qquad  \rhp_1^k\underset{k\to\infty}{\rightarrow}\rhp_1\qquad \mbox{weakly-}*
$$
of finite Radon measures from $\Mm(\R^d)$. For each $k$, the endpoints $\rhp_0^k$ and $ \rhp_1^k$ can be joined by an admissible narrowly continuous path $(\rhp^k_t,\u^k_t)_{t\in [0,1]}$ with energy
$$
E[\rhp^k;\u^k]\leq d_{KB}^2(\rhp_0^k,\rhp_1^k)+k^{-1}.
$$
Due to weak-$*$ compactness and \eqref{masweak}, the masses $m_0^k=Tr\,d\rhp^k_0(\R^d)$ and $m_1^k=Tr\,d\rhp^k_1(\R^d)$ are bounded uniformly in $k\in \N$. By Corollary \ref{corm}, the set $\cup_{k\in\N}\{\rhp_0^k,\rhp_1^k\}$ is bounded in $(\Mm,d_{KB})$, thus the energies $E[\rhp^k;\u^k]$ and the masses $m_t^k=Tr\,\rd\rhp^k_t(\R^d)$ are bounded uniformly in $k\in \N$ and $t\in [0,1]$:
$$
m_t^k\leq M
\quad\mbox{and}\quad
E[\rhp^k;\u^k]\leq \o E.
$$
Evoking \eqref{masweak} and the (classical) Banach-Alaoglu theorem with $\Mm\subset(\mathcal{C}_0(\R^d;\mathcal S) )^*$, we deduce that all the curves $(\rhp_t^k)_{t\in [0,1]}$ lie in a fixed weak-$*$ sequentially relatively compact set $\mathcal{K}_M=\{\rhp\in \Mm:\,Tr\,d\rhp(\R^d)\leq M\}$ uniformly in $k,t$. 
By the fundamental estimate \eqref{eq:fundamental_dBL_estimate} and \eqref{ediv} we get
$$
\left|\int_{\R^d}\Phi:(\rd\rhp^k_t-\rd\rhp^k_s)\right|\leq \sqrt{M \o E}|t-s|^{1/2}(\|\dive \Phi\|_{\infty}+\|\Phi\|_{\infty}) \leq C|t-s|^{1/2}(\|\nabla \Phi\|_{\infty}+\|\Phi\|_{\infty})
$$
for all $\phi\in \mathcal{C}^1_b$, which implies
$$
\forall\,t,s\in [0,1],\,\forall k\in \N:\qquad d_{BL}(\rhp^k_s,\rhp^k_t)\leq C|t-s|^{1/2}.
$$
This uniform $1/2$-H\"{o}lder continuity w.r.t. $d_{BL}$, the sequential lower semicontinuity of $d_{BL}$ with respect to the weak-$*$ convergence (Lemma~\ref{lem:d_BL_lsc_w*} in Appendix \ref{section:appendixa}), and the fact that $\rhp^k_t\in \mathcal{K}_M$ allow us to apply the refined version of Arzel\`a-Ascoli theorem (Lemma \ref{L:aa} in the Appendix \ref{section:appendixa}) to conclude that there exists a $d_{BL}$ (thus narrowly) continuous curve $(\rhp_t)_{t\in [0,1]}$ connecting $\rhp_0$ and $\rhp_1$ such that
\begin{equation}
\label{eq:pointwise_CV_w*}
\forall t\in [0,1]:\qquad \rhp^k_t\to \rhp_t\quad\mbox{ weakly-}*
\end{equation}
along some subsequence $k\to\infty$ (not relabeled here).
Let $\mu^k$ be the matricial measure on $Q$ defined by duality as
$$
\forall \,\phi\in \mathcal{C}_c(Q):\qquad \int_Q\phi(t,x)\rd\mu^k(t,x)=\int_0^1\left(\int_{\R^d}\phi(t,.)\rd\rhp^k_t\right)\rd t.
$$
Exploiting the pointwise convergence \eqref{eq:pointwise_CV_w*} and the uniform bound on the masses $m^k_t\leq M$, a simple application of Lebesgue's dominated convergence guarantees that
$$
\mu^k\to \mu^0\qquad \mbox{ weakly-}*\mbox{ in }{\Mm}(Q),
$$
where the finite measure $\mu^0\in \Mm(Q)$ is defined by duality in terms of the weak-$*$ limit $\rhp_t=\lim \rhp^k_t$ (as was $\mu^k$ in terms of $\rhp^k_t$), and, moreover, $$
\mu^k\mres [s,\tau]\times \R^d \to \mu^0\mres [s,\tau]\times \R^d\quad \mbox{ weakly-}*\mbox{ in }{\Mm}([s,\tau]\times \R^d),\quad s,\tau \in[0,1].
$$

Let $$
X\subset L^\infty(Q;\Sm\times \R^d)
$$ be the linear span of the functions of the form $$\Psi(t,x)=(\Phi,\phi)(t,x){\mathbf 1}_{[p,q]\times \R^d}(t,x), \quad (\Phi,\phi)\in \mathcal C^1_c(Q;\Sm\times \R^d),\quad p,q\in \mathbb Q\cap [0,1].$$
We are going to apply Proposition \ref{Ban} from Appendix \ref{section:appendixa} with this $X$ and the norm
$$
\|(\Phi,\phi)\|=\|\phi\|_{L^\infty(Q)}+\|\Phi\|_{L^\infty(Q)}.
$$ It is easy to see that $(X,\|\cdot\|)$ is separable (indeed, one can easily construct a dense set in $X$ equinumerous with the set of all tuples of natural numbers of discretionary finite length, which is countable).
Consider the following norms on $X$
$$\|(\Phi,\phi)\|_k=\left(\int_{Q}\rd\mu^k\phi\cdot \phi+\rd\mu^k \Phi:\Phi\right)^{1/2},\qquad k=0,1,\dots,
$$
and the linear forms
$$
\varphi_k(\Phi,\phi)=\int_{Q}\rd\mu^ku^k\cdot \phi+\rd\mu^k U^k :\Phi \,,\qquad k=1,2,\dots.
 $$ 
The weak-$*$ convergence of $\mu^k$, uniform boundedness of the masses of $Tr\,\mu^k(Q)\leq M$, and the Cauchy-Schwarz inequality imply that the hypotheses of Proposition \ref{Ban} are met with
$$
c_k:=\|\varphi_k\|_{(X,\|.\|_k)^*}\leq \|\u^k\|_{L^2(0,1;L^2(\rd\rhp^k))}= \sqrt{E[\rhp^k;\u^k]}\leq \sqrt{ d_{KB}^2(\rhp_0^k,\rhp_1^k)+k^{-1}}.
$$ 
Hence, there exists a continuous functional $\varphi_0$ on the space $(X,\|\cdot\|_0)$
 such that up to a subsequence 
$$
\forall (\Phi,\phi)\in\mathcal{C}^1_c(Q;\Sm\times\R^d):\qquad
\int_0^1\left(\int_{\R^d}\rd\rhp^k_t u^k_t\cdot \phi_t+\rd\rhp^k_t U^k_t :\Phi_t\right) \rd t \underset{k\to\infty}{\rightarrow} \varphi_0(\Phi,\phi)
$$
with moreover
\begin{equation}\label{e:phin}
\|\varphi_0\|_{(X,\|\cdot\|_0)^*}\leq \liminf_{k \to \infty} d_{KB}(\rhp_0^k,\rhp_1^k).
\end{equation}

Let $N_0\subset X$ be the kernel of the seminorm $\|\cdot\|_0$.
By the Riesz representation theorem, the dual $(X,\|\cdot\|_0)^*=(X/N_0,\|\cdot\|_0)^*$ can be isometrically identified with the completion $\overline {X/N_0}$ of ${X/N_0}$ with respect to  $\|\cdot\|_0$. As one can see, this completion is exactly $L^2(0,1;L^2(\rd\rhp_t))$.

Consequently, there exists $\u=(U,u)\in L^2(0,T;L^2(\rd\rhp_t))$ such that
$$
\varphi_0(\Phi,\phi)=\int_{Q}\rd\mu^0 u\cdot \phi+\rd\mu^0 U :\Phi =
\int_0^1\left(\int_{\R^d}\rd\rhp_t u_t\cdot \phi_t+\rd 
\rhp_t U_t :\Phi_t\right)\rd t
 $$
 and
 $$
 \|\u\|_{L^2(0,1;L^2(\rd\rhp_t))}=\|\varphi_0\|_{(X,\|\cdot\|_0)^*}.
 $$
Moreover, $(\rhp,\u)$ is an admissible curve joining $\rhp_0,\rhp_1$. Indeed, the established convergences are enough to pass to the limit in the constraint \eqref{consrt} with $t, s\in  \mathbb Q\cap[0,1]$ and $\Phi \in \mathcal C^2_c(Q; \mathcal S)$. Since $G_t$ is a narrowly continuous matrix function, an easy approximation argument shows that \eqref{consrt} actually holds for any $t, s\in [0,1]$ and $\Phi \in \mathcal C^1_b(Q; \mathcal S)$.

Recalling \eqref{e:phin}, it remains to take into account that by the definition of our distance
\begin{align*}
d_{KB}^2(\rhp_0,\rhp_1)\leq E[\rhp;\u]=\|\u\|^2_{L^2(0,1;L^2(\rd\rhp_t))}=\|\varphi_0\|_{(X,\|\cdot\|_0)^*}^2\leq \liminf\limits_{k\to\infty}d_{KB}^2(\rhp^k_0,\rhp^k_1).
\end{align*}
\end{proof}
During the proof of Theorem \ref{theo:equiv} we observed the upper bound \begin{equation} \label{8ub} d_{KB}^2(\rhp_0,\rhp_1)\leq 8(m_0+m_1).\end{equation} Let us show that it can improved.
\begin{prop}[Upper bound of the distance] \label{p:impr} For every pair $\rhp_0,\rhp_1\in \Mm$ with masses $m_0,m_1$ one has \begin{equation} \label{impr} d_{KB}^2(\rhp_0,\rhp_1)\leq  4(m_0+m_1). \end{equation}\end{prop}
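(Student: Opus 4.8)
The plan is to exhibit an explicit admissible path joining $\rhp_0$ and $\rhp_1$ whose energy is at most $4(m_0+m_1)$, constructed fiberwise from the linear interpolation of the square roots of the densities (a ``Bures geodesic''), and then to reach general measures by mollification and the lower semicontinuity of $d_{KB}$.

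\textbf{Reduction to smooth positive-definite densities.} Let $\gamma_\eps$ be a standard mollifier, $g(x):=\re^{-|x|^2}$, and set $\rhp_i^\eps:=\gamma_\eps*\rhp_i+\eps\, g\, I\,\mathcal L^d$ for $i=0,1$. Then $\rd\rhp_i^\eps=G_i^\eps\,\rd\mathcal L^d$ with $G_i^\eps\in\mathcal C^\infty(\R^d;\Pm)$, the mass is $m_i+\eps\, d\,\pi^{d/2}\to m_i$, and $\rhp_i^\eps\to\rhp_i$ narrowly, hence weakly-$*$. By Theorem~\ref{theo:d_LSC_weak*}, $d_{KB}^2(\rhp_0,\rhp_1)\leq\liminf_{\eps\to0}d_{KB}^2(\rhp_0^\eps,\rhp_1^\eps)$, so it suffices to establish \eqref{impr} when $\rd\rhp_i=G_i\,\rd\mathcal L^d$ with $G_i\in\mathcal C^\infty(\R^d;\Pm)$ and $\int\mathrm{Tr}\,G_i\,\rd x<\infty$. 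Write $A:=\sqrt{G_0}$, $B:=\sqrt{G_1}$ for the matrix square roots (smooth, symmetric, positive definite), so $\int(|A|^2+|B|^2)\,\rd x<\infty$.

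\textbf{The path.} Put $R_t:=(1-t)A+tB$ and $\rd\rhp_t:=R_t^2\,\rd\mathcal L^d$; this curve joins $\rhp_0$ to $\rhp_1$, with $\p_tR_t^2=(B-A)R_t+R_t(B-A)$ pointwise. Since $R_t\succ0$, there are a unique symmetric $U_t(x)$ and a (then unique) antisymmetric $\omega_t(x)$, depending smoothly on $x$, with
\[
B-A=\tfrac12\,U_tR_t+R_t\omega_t ,
\]
obtained by solving $R_t^{-1}(B-A)+(B-A)R_t^{-1}=\tfrac12\bigl(R_t^{-1}U_tR_t+R_tU_tR_t^{-1}\bigr)$ for $U_t$ — the operator on the right has all eigenvalues $\geq1$ in the eigenbasis of $R_t$, hence is invertible — and then setting $\omega_t:=R_t^{-1}(B-A)-\tfrac12R_t^{-1}U_tR_t$. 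Multiplying the defining identity by $R_t$ on either side and using symmetry gives $\tfrac12(R_t^2U_t+U_tR_t^2)=R_t(B-A)+(B-A)R_t=\p_tR_t^2$, i.e. $\p_t\rhp_t=\{\rhp_tU_t\}^{Sym}$ classically. Thus $\bigl(\rhp_t,(U_t,0)\bigr)_{t\in[0,1]}$ satisfies the constraint of Definition~\ref{d:metr} with $u\equiv0$; narrow continuity of $t\mapsto\rhp_t$ and the weak form \eqref{consrt} follow by differentiating $\int_{\R^d}\Phi_t:R_t^2\,\rd x$ under the integral sign (legitimate because $\int_{\R^d}|\p_tR_t^2|\,\rd x\leq 2\int_{\R^d}(|A|+|B|)^2\,\rd x<\infty$ uniformly in $t$), and $(U_t,0)\in L^2(0,1;L^2(\rd\rhp_t))$ once its energy is bounded; so the triple is admissible.

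\textbf{The energy estimate.} With $\dot R_t=B-A=\tfrac12U_tR_t+R_t\omega_t$, expand $\mathrm{Tr}(\dot R_t\dot R_t^\top)$: by cyclicity and $\omega_t^\top=-\omega_t$ the two cross-terms cancel, yielding
\[
\mathrm{Tr}(\dot R_t\dot R_t^\top)=\tfrac14\,\mathrm{Tr}(R_t^2U_t^2)-\mathrm{Tr}(R_t^2\omega_t^2),\qquad \mathrm{Tr}(R_t^2\omega_t^2)=-|\omega_tR_t|^2\leq0 .
\]
Hence the energy integrand $\int_{\R^d}\rd\rhp_t\,U_t:U_t=\int_{\R^d}\mathrm{Tr}(R_t^2U_t^2)\,\rd x$ satisfies $\mathrm{Tr}(R_t^2U_t^2)=4|B-A|^2-4|\omega_tR_t|^2\leq 4|B-A|^2$, so
\[
d_{KB}^2(\rhp_0,\rhp_1)\leq\int_0^1\!\!\int_{\R^d}\mathrm{Tr}(R_t^2U_t^2)\,\rd x\,\rd t\leq 4\int_{\R^d}|B-A|^2\,\rd x .
\]
Finally $|B-A|^2=|A|^2+|B|^2-2A:B=\mathrm{Tr}\,G_0+\mathrm{Tr}\,G_1-2\sqrt{G_0}:\sqrt{G_1}$, and $\sqrt{G_0}:\sqrt{G_1}\geq0$ by \eqref{in:cs2}, so $4\int_{\R^d}|B-A|^2\,\rd x\leq4\int_{\R^d}(\mathrm{Tr}\,G_0+\mathrm{Tr}\,G_1)\,\rd x=4(m_0+m_1)$, which is \eqref{impr}.

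\textbf{Main difficulty.} The substantive points are the short linear-algebra lemma above (existence and uniqueness of the splitting $B-A=\tfrac12U_tR_t+R_t\omega_t$ together with the trace identity $\mathrm{Tr}(R_t^2U_t^2)=4|\dot R_t|^2-4|\omega_tR_t|^2$) and the admissibility bookkeeping in the smooth case — in particular checking that the pointwise-defined $U_t$, which may be unbounded at infinity, is a genuine element of the abstract completion $L^2(\rd\rhp_t)$; this is secured by the finite-energy bound and density of $\mathcal C^1_b$ in $L^2$ of a finite measure. The mollification step can be bypassed by carrying out the same construction fiberwise on $\bigl(\ker(G_0(x)+G_1(x))\bigr)^\perp=\ker R_t(x)^\perp$ (which does not depend on $t$), at the price of tracking measurable spectral projections.
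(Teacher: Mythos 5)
Your proof is correct and follows essentially the same route as the paper's: reduce to smooth positive-definite densities by mollification and the weak-$*$ lower semicontinuity of $d_{KB}$ (Theorem~\ref{theo:d_LSC_weak*}), run the curve $\bigl(t\sqrt{G_1}+(1-t)\sqrt{G_0}\bigr)^2$ with $u\equiv 0$, and finish with $|\sqrt{G_1}-\sqrt{G_0}|^2\leq Tr\,G_0+Tr\,G_1$ via \eqref{in:cs2}. The only difference is in implementation: the paper defines the potential abstractly by Riesz duality and bounds its norm by Cauchy--Schwarz, whereas you solve the Lyapunov-type equation pointwise (your $U_t$ is exactly that Riesz representative), which is equally valid.
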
 \begin{proof} Since $\Mmm$ is dense in $\Mm$ in the weak-$*$ topology (one can simply use the standard mollifiers), in view of Theorem \ref{theo:d_LSC_weak*} we can assume that $\rhp_0,\rhp_1\in \Mmm$. Consider the curve $$\rd G_t=\left(t\sqrt {G_1}+(1-t)\sqrt{G_0}\right)^2\rd \mathcal L^d.$$ The corresponding potential $\mathfrak U_t\in L^2(0,1;L^2(\rd G))$ can be defined by Riesz duality as \begin{equation} \label{fairpot} \langle{\mathfrak U},(\Phi,\phi)\rangle_{ L^2(0,1;L^2(\rd G))}=2\int_0^1\int_{\R^d} (\sqrt {G_1}-\sqrt {G_0}):\left(t\sqrt {G_1}+(1-t)\sqrt{G_0}\right)\Phi_t\,\rd \mathcal L^d\,\rd t\end{equation} for all $(\Phi,\phi)\in  L^2(0,1;L^2(\rd G))$. It is not difficult to see that the constraint \eqref{consrt} is satisfied. By definition, the energy of this path is $\|{\mathfrak U}\|^2_{ L^2(0,1;L^2(\rd G))}.$  
 By the Cauchy-Schwarz inequality and \eqref{in:cs2}, \begin{multline*}\langle{\mathfrak U},(\Phi,\phi)\rangle^2_{ L^2(0,1;L^2(\rd G))}  \\ \leq 4\int_0^1 \int_{\R^d}
   (\sqrt {G_1}-\sqrt {G_0}):(\sqrt {G_1}-\sqrt {G_0})\,\rd \mathcal L^d \rd t \\ \times \int_{\R^d} 
  \left(t\sqrt {G_1}+(1-t)\sqrt{G_0}\right)\Phi_t : \left(t\sqrt {G_1}+(1-t)\sqrt{G_0}\right)\Phi_t\,\rd \mathcal L^d\,\rd t \\ = 4\|(\Phi,0)\|^2_{ L^2(0,1;L^2(\rd G))}\int_0^1 \int_{\R^d}
 (\sqrt {G_1}-\sqrt {G_0}):(\sqrt {G_1}-\sqrt {G_0})\,\rd \mathcal L^d\,\rd t\\ \leq 4\|(\Phi,\phi)\|^2_{ L^2(0,1;L^2(\rd G))}\int_0^1 \int_{\R^d}
   (Tr\, {G_1}+Tr\,{G_0})\,\rd \mathcal L^d\,\rd t.  \end{multline*}  Thus, the energy  $E[G_t,{\mathfrak U}_t]$ is less than or equal to the right-hand side of \eqref{impr}.\end{proof}
\begin{remark} Corollary \ref{geodesictozero} indicates that the constant in \eqref{impr} is optimal. \end{remark}
We are now going to prove that $(\Mm,d_{KB})$ is a geodesic space.
%
\begin{defi} [cf. \cite{Bur}] We say that two points $x,y$ in a metric space $(X,\varrho)$ almost admit a midpoint if there exists a sequence $\{z_k\}\subset X$ such that 
\begin{equation*}
\label{amdp*}
|\varrho(x,y)-2 \varrho(x,z_k)|\leq k^{-1},\quad |\varrho(x,y)-2 \varrho(y,z_k)|\leq k^{-1}.
\end{equation*} 
\end{defi}
\begin{theo}[Existence of geodesics]
\label{theo:exist_geodesics}
$(\Mm,d_{KB})$ is a geodesic space, and for all $\rhp_0,\rhp_1\in \Mm$ the infimum in \eqref{e:mini} is always a minimum. Moreover this minimum is attained for a $d_{KB}$-Lipschitz curve $\rhp$ such that $d_{KB}(\rhp_t,\rhp_s)=|t-s|d_{KB}(\rhp_0,\rhp_1)$ and a potential $\u\in L^2(0,1;L^2(\rd\rhp_t))$ such that $\|\u_t\|_{L^2(\rd\rhp_t)}=cst=d_{KB}(\rhp_0,\rhp_1)$ for a.e. $t\in [0,1]$.
\end{theo}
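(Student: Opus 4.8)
The plan is to establish the existence of geodesics via the standard route for dynamical-transport distances: first show that any two points in $(\Mm,d_{KB})$ almost admit a midpoint, then upgrade this to the existence of an actual midpoint by invoking the lower-semicontinuity machinery of Theorem~\ref{theo:d_LSC_weak*}, and finally bootstrap the midpoint property to the existence of a full constant-speed minimizing geodesic together with an optimal potential. The almost-midpoint property is cheap: given $\rhp_0,\rhp_1$, take a minimizing sequence $(\rhp^k_t,\u^k_t)$ in \eqref{e:mini} with energy tending to $d_{KB}^2(\rhp_0,\rhp_1)$, and set $z_k:=\rhp^k_{1/2}$. A time-rescaling argument (restricting the path to $[0,1/2]$ and to $[1/2,1]$ and reparametrizing, exactly as in the triangle-inequality computation in the proof of Theorem~\ref{theo:d_distance}) shows $d_{KB}^2(\rhp_0,z_k)\le \tfrac12 E[\rhp^k;\u^k]$ and likewise for $d_{KB}^2(z_k,\rhp_1)$, while the triangle inequality gives the matching lower bounds; hence $|d_{KB}(\rhp_0,\rhp_1)-2d_{KB}(\rhp_0,z_k)|\to 0$ and similarly on the other side.

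Next I would produce a genuine midpoint. By Corollary~\ref{corm} the masses of the $z_k$ are uniformly bounded, so by \eqref{masweak} and Banach--Alaoglu a subsequence converges weakly-$*$ to some $z\in\Mm$. Lower semicontinuity of $d_{KB}$ with respect to weak-$*$ convergence (Theorem~\ref{theo:d_LSC_weak*}) then yields
$$
d_{KB}(\rhp_0,z)\le\liminf_k d_{KB}(\rhp_0,z_k)=\tfrac12 d_{KB}(\rhp_0,\rhp_1),\qquad d_{KB}(z,\rhp_1)\le\tfrac12 d_{KB}(\rhp_0,\rhp_1),
$$
and since the triangle inequality forces the reverse, $z$ is an exact midpoint with $d_{KB}(\rhp_0,z)=d_{KB}(z,\rhp_1)=\tfrac12 d_{KB}(\rhp_0,\rhp_1)$. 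Iterating the midpoint construction over dyadic rationals produces a map $t\mapsto\rhp_t$ on $\mathbb Q_{\mathrm{dyad}}\cap[0,1]$ satisfying $d_{KB}(\rhp_s,\rhp_t)=|s-t|\,d_{KB}(\rhp_0,\rhp_1)$; this map is Lipschitz for $d_{KB}$, hence (by Theorem~\ref{theo:equiv}, and using the uniform mass bound from Corollary~\ref{corm}) uniformly $d_{BL}$-continuous, so it extends to a $d_{KB}$-Lipschitz curve on all of $[0,1]$ with $d_{KB}(\rhp_t,\rhp_s)=|t-s|\,d_{KB}(\rhp_0,\rhp_1)$.

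It remains to exhibit an admissible $(\rhp_t,\u_t)$ realizing this curve with energy equal to $d_{KB}^2(\rhp_0,\rhp_1)$, which simultaneously shows the infimum in \eqref{e:mini} is attained. For each consecutive pair of dyadic times at level $n$, pick a nearly-optimal admissible path joining $\rhp$ at those times; concatenating and reparametrizing gives, for each $n$, an admissible path $(\rhp^{(n)}_t,\u^{(n)}_t)$ on $[0,1]$ whose endpoints and dyadic marginals agree with $\rhp$ and whose energy is at most $d_{KB}^2(\rhp_0,\rhp_1)+\varepsilon_n$ with $\varepsilon_n\to 0$ (here one uses that on each subinterval the squared distance is $(|t-s|\,d_{KB}(\rhp_0,\rhp_1))^2$ and that the rescaled energies add up correctly). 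Then I would run exactly the compactness-and-duality argument from the proof of Theorem~\ref{theo:d_LSC_weak*} — fundamental estimate \eqref{eq:fundamental_dBL_estimate}, refined Arzelà--Ascoli, the measures $\mu^{(n)}$ on $Q$, Proposition~\ref{Ban}, and the Riesz identification of $(X,\|\cdot\|_0)^*$ with $L^2(0,1;L^2(\rd\rhp_t))$ — to extract a limit admissible pair $(\rhp_t,\u_t)$ (the limit curve being the one already constructed, by uniqueness of weak-$*$ limits at each dyadic time and continuity) with $\|\u\|^2_{L^2(0,1;L^2(\rd\rhp_t))}\le d_{KB}^2(\rhp_0,\rhp_1)$; the reverse inequality is the definition of $d_{KB}$, so equality holds and the infimum is a minimum. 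Finally, constancy of $\|\u_t\|_{L^2(\rd\rhp_t)}$: for any minimizer the function $e(t):=\|\u_t\|^2_{L^2(\rd\rhp_t)}$ has $\int_0^1 e\,\rd t=d_{KB}^2(\rhp_0,\rhp_1)$, while Lemma~\ref{l:hk} applied on each subinterval $[s,t]$ gives $d_{KB}^2(\rhp_s,\rhp_t)\le\int_s^t e\,\rd\tau$; combined with $d_{KB}(\rhp_s,\rhp_t)=|t-s|d_{KB}(\rhp_0,\rhp_1)$ this forces $e(t)=d_{KB}^2(\rhp_0,\rhp_1)$ for a.e.\ $t$ by a standard Lebesgue-point/Jensen argument.

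The main obstacle I anticipate is the last extraction step: making sure the limit of the concatenated near-optimal paths is genuinely admissible (passing to the limit in the weak constraint \eqref{consrt}) and that its energy does not exceed $d_{KB}^2(\rhp_0,\rhp_1)$ rather than merely $\liminf$ of something larger — i.e.\ controlling the energy loss introduced by concatenation across finer and finer dyadic partitions. This is handled by the additivity of rescaled energies together with the midpoint identities, but it requires care; everything else (almost-midpoints, the LSC step, Lipschitz extension, constant-speed) is routine given the results already proved above.
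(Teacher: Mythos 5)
Your proposal is correct and follows the same overall strategy as the paper --- almost-midpoints, weak-$*$ compactness of $d_{KB}$-bounded sets via Corollary~\ref{corm} and Banach--Alaoglu, lower semicontinuity (Theorem~\ref{theo:d_LSC_weak*}), and then the compactness/duality machinery for attainment --- but you unfold two steps that the paper keeps as black boxes, and in one place you do more work than necessary. First, your midpoint-upgrade and dyadic iteration is exactly the content of Lemma~\ref{hrt} (the refined Hopf--Rinow lemma in Appendix~\ref{section:appendixa}), which the paper simply invokes; reproving it inline is fine, though note that the extension from dyadic to all times needs the sequential lower semicontinuity of $d_{KB}$ with respect to weak-$*$ convergence to transfer the Lipschitz bound to the limit point, not just $d_{BL}$-continuity. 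Second, for attainment of the infimum the paper does not concatenate near-optimal paths over dyadic partitions: it applies the argument of Theorem~\ref{theo:d_LSC_weak*} directly to an arbitrary minimizing sequence $(\rhp^k,\u^k)$ with the \emph{fixed} endpoints $\rhp_0,\rhp_1$, obtaining a limit admissible pair with energy $\leq\liminf E[\rhp^k;\u^k]=d_{KB}^2(\rhp_0,\rhp_1)$, i.e.\ a minimizer; the constant-speed and constant-$\|\u_t\|_{L^2(\rd\rhp_t)}$ properties then follow from general facts about metric geodesics (your Lemma~\ref{l:hk} argument for the latter is a clean way to see it). This sidesteps the energy-additivity bookkeeping you flag as the main obstacle. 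Finally, one small imprecision in your almost-midpoint step: the individual bounds $d_{KB}^2(\rhp_0,z_k)\leq\tfrac12E_k$ and $d_{KB}^2(z_k,\rhp_1)\leq\tfrac12E_k$ together with the triangle inequality do \emph{not} force $d_{KB}(\rhp_0,z_k)\to\tfrac12 d_{KB}(\rhp_0,\rhp_1)$ (they only give $\leq d_{KB}(\rhp_0,\rhp_1)/\sqrt2+o(1)$ for each half); you need the \emph{sum} bound $d_{KB}^2(\rhp_0,z_k)+d_{KB}^2(z_k,\rhp_1)\leq\tfrac12E_k$, which the splitting computation you reference does deliver, and which combined with $a+b\geq d_{KB}(\rhp_0,\rhp_1)$ and $(a-b)^2=2(a^2+b^2)-(a+b)^2\to0$ gives the claim.
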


\begin{proof}
We first observe from the definition of our distance that any two points in $\Mm$ almost admit a midpoint.  By Corollary \ref{corm} and the (classical) Banach-Alaoglu theorem,  $d_{KB}$-bounded sequences contain weakly-$*$ converging subsequences. Now Lemma \ref{hrt} (analogue of the Hopf-Rinow theorem for non-complete metric spaces) together with Theorem~\ref{theo:d_LSC_weak*} imply that $(\Mm,d_{KB})$ is a geodesic space. The existence and claimed properties of a minimizing admissible path in \eqref{e:mini} follow by mimicking the argument from the proof of Theorem~\ref{theo:d_LSC_weak*}  for the sequence of almost minimizing paths, and by evoking the general properties of metric geodesics \cite{AT,Bur}.
\end{proof}

\begin{remark} \label{divrem} It is possible to prove that the minimizing potential has the structure $$\mathfrak U_t=(U_t,\dive U_t)$$ in a certain generalized sense, in the spirit of \cite{KMV16A}, but this lies beyond the scope of this article.  \end{remark}
%

The next theorem gives some insight into the geometry of the Kantorovich-Bures space.
\begin{theo}[Explicit geodesics]
\label{prop:geodesic}
Fix any element $G_*\in \Mm$ and define the map $g:\Po\to \Mm$ by $$g(A)=A G_* A.$$ Then for any pair of commuting matrices $A_0,A_1\in \Po$ one has
\begin{equation}
\label{eq:distmatr}
d_{KB}^2(g(A_0), g(A_1))=4\int_{\R^d}\rd G_* (A_1-A_0):(A_1-A_0),
\end{equation}
and a geodesic between $g(A_0)$ and $g(A_1)$ is explicitly given by
\begin{equation}
\label{eq:geodmatr}
\bar G_t:=g(t A_1+(1-t)A_0).
\end{equation}
\end{theo}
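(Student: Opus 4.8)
The plan is to establish the value \eqref{eq:distmatr} of the distance first; the geodesic property \eqref{eq:geodmatr} then comes essentially for free. Throughout put $D:=A_1-A_0$, $A_t:=tA_1+(1-t)A_0\in\Po$, and $\mathcal G:=G_*(\R^d)=\int_{\R^d}\rd G_*\in\Po$. Since $A_0$ and $A_1$ commute, all the $A_t$ and $D$ pairwise commute; moreover for $t\in(0,1)$ one has $\ker A_t=\ker A_0\cap\ker A_1\subset\ker D$, equivalently $D\Pi_t=D$, where $\Pi_t=A_tA_t^\dagger$ is the orthogonal projection onto the range of $A_t$ and $A_t^\dagger$ its Moore--Penrose inverse (which commutes with $D$).

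\emph{Upper bound.} The plan is to take the competitor curve $\bar G_t=g(A_t)=A_tG_*A_t$ with potential $\u_t=(U_t,0)$, $U_t:=2DA_t^\dagger$, constant in $x$. Since $\partial_t\rd\bar G_t=D\,\rd G_*\,A_t+A_t\,\rd G_*\,D=2(A_t\,\rd G_*\,D)^{Sym}$ while $\rd\bar G_t\,U_t=2A_t\,\rd G_*\,(DA_tA_t^\dagger)=2A_t\,\rd G_*\,D\Pi_t=2A_t\,\rd G_*\,D$ for a.e.\ $t$, the weak constraint \eqref{consrt} holds with $u\equiv0$. The energy density is, pointwise in $x$,
\begin{equation*}
\rd\bar G_t\,U_t:U_t=4\,Tr\big(A_t\,\rd G_*\,A_t\,D^2(A_t^\dagger)^2\big)=4\,Tr\big(\rd G_*\,D^2\Pi_t\big)=4\,\rd G_*\,D:D,
\end{equation*}
independent of $t$ (using $A_t(A_t^\dagger)^2A_t=\Pi_t$ and $D^2\Pi_t=D^2$), so $\u\in L^2(0,1;L^2(\rd\bar G_t))$ and $E[\bar G;\u]=4\int_{\R^d}\rd G_*\,D:D$. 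Hence $d_{KB}^2(g(A_0),g(A_1))\le4\int_{\R^d}\rd G_*\,D:D$, the path having constant metric speed. (If one prefers to avoid pseudo-inverses, one does this first for $A_0,A_1\in\Pm$ with $U_t=2DA_t^{-1}$ and then passes to $\Po$ via weak-$*$ lower semicontinuity, Theorem~\ref{theo:d_LSC_weak*}, since $g(A_i+\eps I)\to g(A_i)$.)

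\emph{Lower bound.} This is the heart of the matter. Let $(\rhp_t,\u_t)$ be any admissible path from $g(A_0)$ to $g(A_1)$ with energy $E$. For $\eps>0$ I would test against the $x$-independent matrix $\Phi_t^\eps:=2D(A_t+\eps I)^{-1}\in\mathcal C^1_b(Q;\mathcal S)$; a one-line differentiation gives the matrix Hamilton--Jacobi (Riccati) identity $2\partial_t\Phi_t^\eps+(\Phi_t^\eps)^2=0$. Completing the square, $\rd\rhp_t\,u_t\cdot u_t+\rd\rhp_t\,U_t:U_t\ge2\,\rd\rhp_t\,U_t:\Phi_t^\eps-\rd\rhp_t\,\Phi_t^\eps:\Phi_t^\eps$ (because $\rd\rhp_t\in\Po$); integrating this over $Q$ and using \eqref{consrt} with $s=0,t=1$ (and $\dive\Phi_t^\eps\equiv0$) to substitute $\int_0^1\!\!\int\rd\rhp_t\,U_t:\Phi_t^\eps=\Phi_1^\eps:\rhp_1(\R^d)-\Phi_0^\eps:\rhp_0(\R^d)-\int_0^1 Tr(\rhp_t(\R^d)\,\partial_t\Phi_t^\eps)\,\rd t$, every term involving $\rhp_t(\R^d)$ collapses to $-\int_0^1 Tr(\rhp_t(\R^d)[2\partial_t\Phi_t^\eps+(\Phi_t^\eps)^2])\,\rd t=0$ by the Riccati identity, leaving $E\ge2\big(\Phi_1^\eps:g(A_1)(\R^d)-\Phi_0^\eps:g(A_0)(\R^d)\big)$. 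Letting $\eps\to0$, $(A_i+\eps I)^{-1}A_i\to\Pi_i$ and $A_iD\Pi_i=A_iD$, so the right-hand side converges to $4\,Tr\big((A_1-A_0)D\,\mathcal G\big)=4\,Tr(\mathcal G D^2)=4\int_{\R^d}\rd G_*\,D:D$. Taking the infimum over admissible paths gives the reverse inequality, hence \eqref{eq:distmatr}.

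\emph{Geodesic and main difficulty.} Applying \eqref{eq:distmatr} to the commuting pair $A_s,A_t$ (note $A_t-A_s=(t-s)D$) gives $d_{KB}^2(\bar G_s,\bar G_t)=(t-s)^2\,d_{KB}^2(g(A_0),g(A_1))$, so $t\mapsto\bar G_t$ is a constant-speed minimising geodesic, as claimed. The only genuine obstacle I expect is the lower bound: one must recognise that the correct dual certificate solves $2\dot\Phi+\Phi^2=0$, whose explicit solutions are $\Phi_t=2D(A_t+\text{const}\cdot I)^{-1}$, and that the $\eps$-shift is precisely what keeps it bounded — hence admissible as a test function in \eqref{consrt} — even when $A_0$ or $A_1$ is degenerate; after that, it is only the elementary fact $\mathrm{range}\,D\subset\mathrm{range}\,A_t$ for $t\in(0,1)$ that makes the primal constraint close.
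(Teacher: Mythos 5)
Your proof is correct, but your lower bound takes a genuinely different route from the paper's. The paper proceeds by three reductions: it first reduces to $A_0\in\Pm$ via the triangle inequality with $A_0+\epsilon I$; it then shows that spatially averaging any admissible path, $\tilde\rhp_t:=\lambda\int_{\R^d}\rd\rhp_t$ with $\lambda$ a probability measure, can only decrease the energy (proved through a $k^{-1}I$-regularization and an orthogonal-projection computation); and it finally solves the resulting spatially homogeneous, finite-dimensional variational problem by elementary calculus of variations. You instead exhibit a dual certificate: the Riccati solution $\Phi^\eps_t=2D(A_t+\eps I)^{-1}$ of $2\partial_t\Phi+\Phi^2=0$, combined with completing the square (equivalently, $\|\u\|^2\geq 2\langle\u,(\Phi^\eps,0)\rangle-\|(\Phi^\eps,0)\|^2$ in $L^2(0,1;L^2(\rd\rhp_t))$) and the constraint \eqref{consrt} with $\dive\Phi^\eps\equiv 0$, reduces the energy of \emph{every} admissible path to a pure boundary term, and the $\eps$-shift keeps the certificate in $\mathcal C^1_b(Q;\Sm)$ even for degenerate $A_0,A_1\in\Po$, so the paper's preliminary reduction to $\Pm$ is not needed. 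Your argument is shorter and self-contained; the paper's approach buys a structural by-product (spatial averaging is energy-nonincreasing, which identifies the minimizer as spatially homogeneous) at the price of more technical work. The upper bounds essentially coincide: your explicit $U_t=2DA_t^\dagger$ is the Riesz representative of the potential the paper defines by duality in \eqref{goodpot}, and your observation that $\ker A_t=\ker A_0\cap\ker A_1\subset\ker D$ for $t\in(0,1)$ correctly legitimizes the pseudo-inverse manipulations and the constancy of the speed. Deducing the geodesic property by applying \eqref{eq:distmatr} to the commuting pair $A_s,A_t$ is the standard final step and agrees with the paper.
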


\begin{proof} {\it Step 1}.  Define a potential $\bar{{\mathfrak U}}_t\in L^2(0,1;L^2(\rd\bar G))$ by Riesz duality as \begin{equation} \label{goodpot} \langle\bar{\mathfrak U},(\Phi,\phi)\rangle_{ L^2(0,1;L^2(\rd\bar G))}=2\int_0^1\int_{\R^d} \rd G_*(A_1-A_0):(t A_1+(1-t)A_0)\Phi_t\,\rd t\end{equation} for all $(\Phi,\phi)\in  L^2(0,1;L^2(\rd\bar G))$. A straightforward computation shows that $(\bar G_t, \bar{\mathfrak U}_t)$ satisfies the constraint \eqref{consrt}. The energy of this path coincides with $\|\bar{\mathfrak U}\|^2_{ L^2(0,1;L^2(\rd\bar G))}.$  
 By the Cauchy-Schwarz inequality, \begin{multline*}\langle\bar{\mathfrak U},(\Phi,\phi)\rangle^2_{ L^2(0,1;L^2(\rd\bar G))}  \\ \leq 4\int_0^1 \int_{\R^d}
  \rd  G_*(A_1-A_0):(A_1-A_0)\int_{\R^d} 
  \rd  G_*(t A_1+(1-t)A_0)\Phi_t: (t A_1+(1-t)A_0)\Phi_t\,\rd t \\\leq 4\|(\Phi,\phi)\|^2_{ L^2(0,1;L^2(\rd\bar G))}\int_0^1 \int_{\R^d}
  \rd  G_*(A_1-A_0):(A_1-A_0)\,\rd t.  \end{multline*} Thus, the energy  $E[\bar G_t,\bar {\mathfrak U}_t]$ is less than or equal to the right-hand side of \eqref{eq:distmatr}.

{\it Step 2}. In view of the previous step, it suffices to prove that the square of the distance is bounded from below by the right-hand side of \eqref{eq:distmatr}. We first observe that without loss of generality we may assume that $A_0\in \Pm$. Indeed, the general case $A_0\in \Po$ would immediately follow by letting $\epsilon \to 0_+$ in the triangle inequality $$d_{KB}(g(A_0), g(A_1))\geq d_{KB}(g(A_0+\epsilon I), g(A_1))-d_{KB}(g(A_0+\epsilon I), g(A_0)).$$

{\it Step 3}. Consider any admissible path $(\rhp_t,\u_t)_{t\in [0,1]}$ connecting $\rhp_0:=g(A_0)$ to $\rhp_1:=g(A_1)$. Let $\lambda$ be any scalar probability measure on $\R^d$. 
Set
$
\tilde{\rhp}_t:=\lambda\int_{\R^d}\rd\rhp_t, 
$ and define $\tilde{{\mathfrak U}}_t\in  L^2(0,1;L^2(\rd\tilde G))$ by duality as \begin{equation} \label{goodpot1} \langle\tilde{\mathfrak U},(\Phi,\phi)\rangle_{ L^2(0,1;L^2(\rd\tilde G))}=\int_0^1\int_{\R^d} \rd G_t U_t:\int_{\R^d} \Phi_t\rd \lambda\,\rd t\end{equation} for all $(\Phi,\phi)\in  L^2(0,1;L^2(\rd\tilde G))$.
Then$(\tilde {\rhp}, \tilde{\mathfrak U})$ is an admissible path (joining $A_0 \lambda \int_{\R^d} \rd G_* A_0$ and $A_1 \lambda \int_{\R^d} \rd G_* A_1$). We claim that it has lesser energy than $({\rhp}, {\mathfrak U})$. To prove the claim, we approximate this path with the sequence $$
\tilde{\rhp}_t^k:=\lambda\left(k^{-1} I+\int_{\R^d}\rd\rhp_t\right)
.$$  The corresponding potentials are \begin{equation} \label{goodpotk} \langle\tilde{\mathfrak U}^k,(\Phi,\phi)\rangle_{ L^2(0,1;L^2(\rd\tilde G^k))}=\int_0^1\int_{\R^d} \rd G_t U_t:\int_{\R^d} \Phi_t\rd \lambda\,\rd t.\end{equation} Let us equip the linear  space $\R^{d\times d}$ with the scalar product $$(B,B)_{k,t}=k^{-1}B:B+\int_{\R^d}\rd\rhp_t B:B,$$ and let $\Pi_{k,t}$ be the allied orthogonal projection onto the subspace $\mathcal S$. Then we can explicitly compute the components of the potentials $\tilde{\mathfrak U}^k$:
$$
\tilde{U}^k_t :=\Pi_{k,t}\left(\left(k^{-1}I+\int_{\R^d}\rd\rhp_t \right)^{-1}\int_{\R^d}\rd\rhp_t U_t\right),\qquad \tilde u^k_t\equiv 0.
$$ Performing some algebraic manipulations, we estimate
\begin{multline*}
E[\tilde{\rhp}^k;\tilde{\u}^k] 
  =\int_0^1 \int_{\R^d}\rd \lambda \left(k^{-1}I+\int_{\R^d}\rd\rhp_t\right) \tilde{U}^k_t: \tilde{U}^k_t\,\rd t = \int_0^1(\tilde{U}^k_t, \tilde{U}^k_t)_{k,t}\,\rd t\\ \leq \int_0^1 \left(\left(k^{-1}I+\int_{\R^d}\rd\rhp_t \right)^{-1}\int_{\R^d}\rd\rhp_t U_t, \left(k^{-1}I+\int_{\R^d}\rd\rhp_t \right)^{-1}\int_{\R^d}\rd\rhp_t U_t\right)_{k,t}\,\rd t \\ \leq \int_0^1 \left(k^{-1}I+\int_{\R^d}\rd\rhp_t \right)\left(\left(k^{-1}I+\int_{\R^d}\rd\rhp_t \right)^{-1}\int_{\R^d}\rd\rhp_t U_t\right)\\
  :\left( \left(k^{-1}I+\int_{\R^d}\rd\rhp_t \right)^{-1}\int_{\R^d}\rd\rhp_t U_t\right)\,\rd t\\+\int_0^1 \int_{\R^d}\rd\rhp_t\left(U_t-\left(k^{-1}I+\int_{\R^d}\rd\rhp_t \right)^{-1}\int_{\R^d}\rd\rhp_t U_t\right)\\:\left(U_t-\left(k^{-1}I+\int_{\R^d}\rd\rhp_t \right)^{-1}\int_{\R^d}\rd\rhp_t U_t\right)\,\rd t \\
  =  \int_0^1 \int_{\R^d}\rd\rhp_tU_t:U_t\,\rd t\\-k^{-1}\int_0^1 \left(\left(k^{-1}I+\int_{\R^d}\rd\rhp_t \right)^{-1}\int_{\R^d}\rd\rhp_t U_t\right):\left(\left(k^{-1}I+\int_{\R^d}\rd\rhp_t \right)^{-1}\int_{\R^d}\rd\rhp_t U_t\right)\,\rd t\\\leq E[\rhp;\u].
\end{multline*}
Arguing as in the proof of Theorem ~\ref{theo:d_LSC_weak*} we can pass to the limit inferior as $k\to \infty$ to show that $E[\tilde\rhp_t,\tilde{\u}_t]\leq E[\rhp;\u]$ as claimed. 

{\it Step 4}. Obviously, the right-hand side of \eqref{eq:distmatr} does not change if we replace $G_*$ by $\lambda D$, where  $D:=\int_{\R^d} \rd G_*\in \Po$, and $\lambda$ is as above. Thus, by the previous steps it is enough to check that the energies of the admissible paths of the form $\rhp_t=\lambda F_t$ with $F_t\in \Po$, $F_i=A_i D A_i$, $i=0,1$, $A_0\in \Pm$, with constant-in-space potentials $\mathfrak U_t=(U_t,0)\in L^2(0,1;L^2(\rd G))$ are bounded from below by the right-hand side of \eqref{eq:distmatr}. 
Some finite-dimensional calculus of variations shows that the minimum of those energies is achieved for $F_t=(t A_1+(1-t)A_0)D(t A_1+(1-t)A_0)$ with $U_t=2(t A_1+(1-t)A_0))^{-1}(A_1-A_0)$, $t\neq 1$. The corresponding energy is exactly $4D(A_1-A_0):(A_1-A_0)$. \end{proof}

\begin{cor}[Geodesic to zero]\label{geodesictozero} For any $G_*\in \Mm$,
$d_{KB}^2(G_*, 0)=4\, Tr\,\rd G_* (\R^d)$,
and $(1-t)^2 G_*$ is a geodesic between $G_*$ and $0$.
\end{cor}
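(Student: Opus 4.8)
The plan is to read this off as an immediate specialization of Theorem~\ref{prop:geodesic}. I would apply that theorem to the given $G_*$ with the commuting pair $A_0:=I$ and $A_1:=0$. Both lie in $\Po$ (indeed $I\in\Pm$, and the zero matrix is trivially positive semidefinite), and any matrix commutes with a multiple of the identity, so the hypotheses are met. With $g(A)=AG_*A$ as in the statement of Theorem~\ref{prop:geodesic}, we have $g(A_0)=IG_*I=G_*$ and $g(A_1)=0\cdot G_*\cdot 0=0$.

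Then formula \eqref{eq:distmatr} gives
$$
d_{KB}^2(G_*,0)=4\int_{\R^d}\rd G_*(A_1-A_0):(A_1-A_0)=4\int_{\R^d}\rd G_*(-I):(-I)=4\int_{\R^d}\rd G_*:I=4\,Tr\,\rd G_*(\R^d),
$$
where the last two equalities use $\rd G_*\cdot(-I)=-\rd G_*$ together with the definition of the trace of a matrix-valued measure. Likewise, the explicit geodesic \eqref{eq:geodmatr} specializes to
$$
\bar G_t=g(tA_1+(1-t)A_0)=g\big((1-t)I\big)=\big((1-t)I\big)\,G_*\,\big((1-t)I\big)=(1-t)^2G_*,
$$
which equals $G_*$ at $t=0$ and $0$ at $t=1$, so it is a geodesic between $G_*$ and $0$ as claimed.

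I do not anticipate any genuine obstacle: the only point worth verifying is that the degenerate choice $A_1=0$ is actually admissible in Theorem~\ref{prop:geodesic}, which it is, since $\Po$ contains the zero matrix and it commutes with $I$. As a sanity check one may note that this is exactly the equality case in the upper bound \eqref{impr} of Proposition~\ref{p:impr} with $m_1=0$, and that the resulting geodesic coincides with the path of energy $4m_0$ (with potential $\mathfrak U_t=(-\tfrac{2}{1-t}I,0)$) already exhibited in the proof of Theorem~\ref{theo:d_distance}.
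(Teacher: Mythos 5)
Your proposal is correct and is exactly the intended derivation: the paper states this as an immediate corollary of Theorem~\ref{prop:geodesic} (with no separate proof), obtained by specializing to the commuting pair $A_0=I$, $A_1=0$, precisely as you do. Your checks that $0\in\Po$ is admissible and that the result matches the path of energy $4m_0$ from the proof of Theorem~\ref{theo:d_distance} are accurate but not needed beyond what you wrote.
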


\begin{remark}[Bures manifolds] \label{constmatr} The set $\Pm\subset \Sm$ has a natural structure of a smooth manifold, and the tangent space $T_{P}\Pm$ at every point $P\in \Pm$ can be identified with $\Sm$. For each $\Xi \in T_{P}\Pm$, let $U_{\Xi}\in \Sm$ be the unique solution to the Lyapunov equation \cite{Bh} 
$$2\Xi=PU_{\Xi}+U_{\Xi}P.$$ Then \begin{equation} \langle \Xi_1,\Xi_2\rangle_{P}:=PU_{\Xi_1}:U_{\Xi_2}\end{equation} is a Riemannian metric on $\Pm$ that is known as the Bures metric \cite{Bures,BJL}. The induced Riemannian metric on the submanifold $\mathcal P^1\subset \Pm$ is also called the Bures metric \cite{U92}. Actually, $\Pm$ is a metric cone over $\mathcal P^1$. In the next section we will see  that $\Mm$ has a similar  cone structure.   The geodesics between $P_0,P_1\in \Pm$ can be constructed as follows. Let $X\in \Pm$ be the unique solution to the Riccati equation \cite{Bh} $$XP_0X=P_1.$$ Then the geodesic is $$((1-t)I+tX)P_0((1-t)I+tX).$$ Let $d_B$ denote the corresponding Riemannian distance on $\Pm$. Fix any probability measure $\lambda $ on $\R^d$.  Since $I$ and $X$ commute, by Theorem \ref{prop:geodesic} the embedding $$P \mapsto P\lambda$$ from $\Pm$ into $\Mm$ is a totally geodesic map (in the sense of \cite{oh}).  Moreover, in view of Remark \ref{helling}, $$d_{B}(P_0,P_1)=d_{KB}(P_0 \lambda,P_1 \lambda)=d_{H}(P_0 \lambda,P_1 \lambda).$$ 


\end{remark}

\section{The spherical distance and the conic structure}

\label{section:sphere}

In this section we are going to explore the conic structure of $(\Mm,d_{KB})$. We start by defining a similar distance on $\Ma$ (analogue of probability measures) by simply normalizing the masses of the evolving densities:
\begin{defi} [Spherical Kantorovich-Bures distance]\label{d:metr2}
Given two matrix measures $\rhp_0,\rhp_1\in \Ma$ we define
\begin{equation}\label{e:mini2}
d_{SKB}^2(\rhp_0,\rhp_1):=\inf_{\mathcal{A}_1(\rhp_0,\rhp_1)}\int_0^1\left(\int_{\R^d}\rd \rhp_t u_t \cdot u_t +\int_{\R^d} \rd \rhp_t U_t :U_t \right) \rd t.
\end{equation}
where the admissible set $\mathcal{A}_1(\rhp_0,\rhp_1)$ consists of all couples $(\rhp_t,\u_t)_{t\in [0,1]}$ such that
\begin{equation*}
\left\{ 
\begin{array}{l} 
\rhp\in\mathcal{C}_w([0,1];\Ma),\\
\rhp|_{t=0}=\rhp_0;\quad \rhp|_{t=1}=\rhp_1,\\
\u\in L^2(0,T;L^2(\rd\rhp_t)),\\
\p_t\rhp_t=\left\{-\nabla(\rhp_t u_t)+\rhp_tU_t\right\}^{Sym} \quad \mbox{in the weak sense}.
\end{array}
\right.
\end{equation*}
\end{defi}

\begin{prop} $d_{SKB}$ is a distance on $\Ma$.\end{prop}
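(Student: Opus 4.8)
The plan is to deduce the proposition directly from the properties of $d_{KB}$ established earlier, together with the simple observation that on the fixed-mass slice $\Ma$ the only admissible paths are mass-preserving, so that the normalization in $d_{SKB}$ is automatic. First I would note that symmetry is again immediate from the definition (the construction is invariant under time reversal $t\mapsto 1-t$, replacing $u_t,U_t$ by $-u_{1-t},-U_{1-t}$), and that $d_{SKB}$ is finite: given $\rhp_0,\rhp_1\in\Ma$, the path from the proof of Theorem \ref{theo:d_distance} passing through $0$ does \emph{not} remain in $\Ma$, so instead I would either appeal to the explicit geodesics of Theorem \ref{prop:geodesic} in the special case $A_i$ commuting (e.g.\ connecting $\rhp_0$ to $I\lambda/d$ to $\rhp_1$ through constant-matrix, fixed-trace densities when the supports allow), or, more robustly, use a cut-off/rescaling argument: take any admissible curve for $d_{KB}(\rhp_0,\rhp_1)$ — which exists by Theorem \ref{theo:exist_geodesics} — and renormalize it pointwise in time to have unit mass, checking that this renormalized curve is admissible for $d_{SKB}$ with controlled energy.

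For the triangle inequality the argument of Theorem \ref{theo:d_distance} goes through verbatim: given almost-minimizing admissible paths for $d_{SKB}(\rhp_0,P)$ and $d_{SKB}(P,\rhp_1)$ with $P\in\Ma$, the concatenation with the time-rescaling $t\mapsto t/\tau$ resp.\ $t\mapsto(t-\tau)/(1-\tau)$ is again an admissible $\Ma$-valued curve (the constraint is linear and the rescaling multiplies $\u$ by $1/\tau$ resp.\ $1/(1-\tau)$, exactly as before), so $d_{SKB}^2(\rhp_0,\rhp_1)\le \frac1\tau d_{SKB}^2(\rhp_0,P)+\frac1{1-\tau}d_{SKB}^2(P,\rhp_1)$, and optimizing over $\tau$ gives the claim. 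Nothing about fixed mass obstructs this step.

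The only genuine point requiring care is the indiscernibility $d_{SKB}(\rhp_0,\rhp_1)=0\Rightarrow\rhp_0=\rhp_1$. Here I would run the same argument as in Theorem \ref{theo:d_distance}: a minimizing sequence $(\rhp^k_t,\u^k_t)$ has $E[\rhp^k;\u^k]\to 0$; since the masses are now \emph{identically} $1$, Lemma \ref{lem:fundamental_dBL_estimate} applies (with $M$ bounded uniformly) and the fundamental estimate \eqref{eq:fundamental_dBL_estimate} at $s=0$, $t=1$ gives $|\int_{\R^d}\Phi:(\rd\rhp_1-\rd\rhp_0)|\le(\|\dive\Phi\|_\infty+\|\Phi\|_\infty)\sqrt{ME[\rhp^k;\u^k]}\to 0$ for every $\Phi\in\mathcal C^\infty_c(\R^d;\Sm)$, hence $\rhp_0=\rhp_1$.

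The step I expect to be the mildest obstacle is establishing finiteness of $d_{SKB}$, precisely because the ``geodesic to zero'' trick is unavailable inside $\Ma$; one must exhibit \emph{some} finite-energy admissible curve between arbitrary unit-trace measures, and the cleanest way is probably to first connect $\rhp_0$ and $\rhp_1$ to a common reference measure of the form $\frac1d I\otimes\lambda$ (when both measures dominate a common $\lambda$, or after an approximation/regularization of $\rhp_0,\rhp_1$ by elements of $\Mmm\cap\Ma$) via the explicit paths of Theorem \ref{prop:geodesic}, and then concatenate; a routine computation bounds the resulting energy in terms of the masses, which are $1$. Everything else is a transcription of the $d_{KB}$ proof.
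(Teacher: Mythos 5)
Your treatment of symmetry, the triangle inequality and indiscernibility is correct and follows the paper's intent (the paper's own proof is a one-liner: it observes that $\mathcal{A}_1(\rhp_0,\rhp_1)$ is contained in $\mathcal{A}(\rhp_0,\rhp_1)$, so $d_{SKB}\geq d_{KB}$ on $\Ma$ and indiscernibility is inherited from Theorem~\ref{theo:d_distance}; your re-run of Lemma~\ref{lem:fundamental_dBL_estimate} proves the same thing with slightly more work). The genuine gap is exactly where you suspected it: finiteness. Neither of your two fixes works as stated. First, the explicit paths of Theorem~\ref{prop:geodesic} are of the form $(tA_1+(1-t)A_0)G_*(tA_1+(1-t)A_0)$; they do not preserve the trace (so they leave $\Ma$) and they cannot connect a general element of $\Ma$ to a reference measure such as $\tfrac1d I\otimes\lambda$, since that would force $\rhp_0$ to be of the very special form $A_0G_*A_0$ with $A_0$ constant. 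Second, renormalizing an arbitrary finite-energy admissible curve for $d_{KB}(\rhp_0,\rhp_1)$ by its mass $m_t$ requires $m_t$ to stay bounded away from zero: the renormalized potential is $\tilde U_t=U_t-\tfrac{\dot m_t}{m_t}I$ and the renormalized energy carries a factor $1/m_t$, both of which blow up if $m_t\to0$. At this stage of the paper nothing rules out a minimizing curve between two unit-trace measures passing through (or near) the apex; the a priori bound $|m_t-1|\leq\sqrt{ME}\,|t|^{1/2}$ from Lemma~\ref{lem:fundamental_dBL_estimate} is far too weak to prevent this, and excluding it via the cone structure of Theorem~\ref{th:cone} would be circular.

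The step can be closed by exhibiting a concrete trace-preserving path. For instance, take $u\equiv0$ and
\begin{equation*}
G_t:=\cos^2(\pi t/2)\,\rhp_0+\sin^2(\pi t/2)\,\rhp_1,
\end{equation*}
with $\u_t$ defined by Riesz duality as in \eqref{fairpot}, i.e.\ $\langle\u,(\Phi,\phi)\rangle=\int_0^1\int_{\R^d}\rd(\p_tG_t):\Phi_t\,\rd t$. Since $\rhp_0\leq\cos^{-2}(\pi t/2)\,G_t$ and $\rhp_1\leq\sin^{-2}(\pi t/2)\,G_t$, the Cauchy--Schwarz inequality gives $|\int\rd(\p_tG_t):\Phi_t|\leq\pi(|\sin(\pi t/2)|+|\cos(\pi t/2)|)\|\Phi_t\|_{L^2(\rd G_t)}$, so the functional is bounded with energy at most $2\pi^2$, uniformly over $\rhp_0,\rhp_1\in\Ma$ and with no approximation by $\Mmm$ needed. (Alternatively, your renormalization idea does work if applied not to an arbitrary minimizing curve but to the square-root interpolation of Proposition~\ref{p:impr}, whose mass $m_t=t^2+(1-t)^2+2t(1-t)\int\sqrt{G_0}{:}\sqrt{G_1}$ is bounded below by $1/2$; but then you must handle general, possibly singular, elements of $\Ma$ separately, since the lower semicontinuity of $d_{SKB}$ is not available at this point.) With finiteness repaired, the rest of your argument stands.
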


The proof is similar to the one of Theorem \ref{theo:d_distance}. Note that the indiscernability is obvious since by construction $d_{SKB}\geq d_{KB}$ on $\Ma$. 
\begin{remark}[Equivalent definition] \label{altsp} It is easy to see that Definition \ref{d:metr2} can be equivalently written in the following way: 
given two matrix measures $\rhp_0,\rhp_1\in \Ma$ we define
\begin{multline}\label{e:mini2.1}
d_{SKB}^2(\rhp_0,\rhp_1):=\inf_{\mathcal{A}_2(\rhp_0,\rhp_1)}\\ \int_0^1\left(\int_{\R^d}\rd \rhp_t u_t \cdot u_t  +\int_{\R^d} \rd \rhp_t U_t :U_t-\left(\int_{\R^d} \rd \rhp_t :U_t\right)^2\right) \rd t.
\end{multline}
where the admissible set $\mathcal{A}_2(\rhp_0,\rhp_1)$ consists of all couples $(\rhp_t,\u_t)_{t\in [0,1]}$ such that
\begin{equation*}
\left\{ 
\begin{array}{l} 
\rhp\in\mathcal{C}_w([0,1];\Ma),\\
\rhp|_{t=0}=\rhp_0;\quad \rhp|_{t=1}=\rhp_1,\\
\u\in L^2(0,T;L^2(\rd\rhp_t)),\\
\p_t\rhp_t+G_t\int_{\R^d} \rd G_t:U_t=\left\{-\nabla(\rhp_t u_t)+\rhp_tU_t\right\}^{Sym} \quad \mbox{in the weak sense}.
\end{array}
\right.\end{equation*} Indeed, $\mathcal{A}_1(G_0,G_1)=\mathcal{A}_2(G_0,G_1)\cap \left[\int_{\R^d} \rd G_t:U_t\equiv 0\right]$, hence the distance \eqref{e:mini2} is larger than or equal to \eqref{e:mini2.1}. On the other hand, the inverse inequality is also true since for any path $(\rhp_t,U_t,u_t)\in \mathcal{A}_2(G_0,G_1)$ we can find a path in $\mathcal{A}_1(G_0,G_1)$ of the same energy: one just takes $(\rhp_t,\mathfrak V_t),$ where $\mathfrak V_t\in L^2(0,T;L^2(\rd\rhp_t))$ is defined by duality via \begin{multline} \label{vpot} \langle {\mathfrak V},(\Phi,\phi)\rangle_{ L^2(0,1;L^2(\rd G))}
\\
=\int_0^1\left(\int_{\R^d}\rd \rhp_t(x) u_t (x)\cdot \phi_t(x) +
\int_{\R^d} \rd \rhp_t(x) \left[ U_t(x)-I\int_{\R^d} \rd 
\rhp_t(y):U_t(y)\right]:\Phi_t(x)\right)\,\rd t.
\end{multline} \end{remark}

We recall \cite{BH99,Bur} that, given a metric space $(X,d_X)$ of diameter $\leq \pi$, one can define another metric space $(\mathfrak C(X),d_{\mathfrak C(X)})$, called a \emph{cone} over $X$, in the following manner. Consider the quotient $\mathfrak C(X):=X\times [0,\infty)/X\times \{0\}$, that is, all points of the fiber $X\times \{0\}$ constitute a single point of the cone that is called the apex. Now set \begin{equation} \label{cone} d_{\mathfrak C(X)}^2([x_0,r_0],[x_1,r_1]):=r_0^2+r_1^2-2r_0r_1\cos (d_{X}(x_0,x_1)).\end{equation} Very few metric spaces are actually cones, and this property provides neat scaling and other nice geometric features \cite{LM17}. A particularly regular situation appears when the diameter of $X$ is strictly less than $\pi$, since in this case there is a one-to-one correspondence between the geodesics in $X$ and $\mathfrak C(X)$. Given a cone $Y=\mathfrak C(X)$, $X$ may be referred to as the \emph{sphere} in $Y$. 

\begin{lem}[Characterization of spherical distances] \label{charsp} If  $X$ is a length space, and $Y=\mathfrak C(X)$, then the distance $d_{X}(x_0,x_1)$ coincides with the infimum of $Y$-lengths of continuous curves $[x_t,1]$ that join $[x_0,1]$ and $[x_1,1]$ and lie within $X\times \{1\}$.  \end{lem}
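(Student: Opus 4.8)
The plan is to establish the two inequalities between $d_X(x_0,x_1)$ and the infimum (call it $\ell$) of $Y$-lengths of curves $[x_t,1]$ lying in the slice $X\times\{1\}$. For the inequality $\ell\le d_X(x_0,x_1)$, I would take any curve $t\mapsto x_t$ in $X$ joining $x_0$ to $x_1$, lift it to the curve $t\mapsto[x_t,1]$ in $Y$, and compare lengths. Using the cone metric \eqref{cone}, for a partition $0=t_0<\dots<t_n=1$ the $Y$-distance between consecutive points is $d_{\mathfrak C(X)}([x_{t_{i-1}},1],[x_{t_i},1])=\sqrt{2-2\cos(d_X(x_{t_{i-1}},x_{t_i}))}=2\sin\!\big(\tfrac12 d_X(x_{t_{i-1}},x_{t_i})\big)\le d_X(x_{t_{i-1}},x_{t_i})$, using the elementary bound $\sin\theta\le\theta$. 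Summing over the partition and taking the supremum over partitions shows that the $Y$-length of the lifted curve is at most the $X$-length of the original curve; taking the infimum over all curves in $X$ and using that $X$ is a length space gives $\ell\le d_X(x_0,x_1)$.

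For the reverse inequality $d_X(x_0,x_1)\le\ell$, I would start from an arbitrary continuous curve $\gamma:[0,1]\to Y$ of the form $\gamma(t)=[x_t,1]$ with $x_t\in X$, $x_0,x_1$ the prescribed endpoints, and bound its $Y$-length from below by $d_X(x_0,x_1)$. The key estimate runs in the opposite direction: for points on the unit slice one has, for a partition, $d_{\mathfrak C(X)}([x_{t_{i-1}},1],[x_{t_i},1])=2\sin\!\big(\tfrac12 d_X(x_{t_{i-1}},x_{t_i})\big)$, which is comparable to $d_X(x_{t_{i-1}},x_{t_i})$ only when the latter is not close to $\pi$. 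Since the diameter of $X$ is $\le\pi$, on a sufficiently fine partition each increment $d_X(x_{t_{i-1}},x_{t_i})$ is small (by continuity of $t\mapsto x_t$, which follows from continuity of $\gamma$ into $Y$ away from the apex — note $\gamma$ stays in $X\times\{1\}$, so the radial coordinate is bounded away from $0$), and on such increments $2\sin(\theta/2)\ge \tfrac{2}{\pi}\theta$, or better, for $\theta$ in a fixed compact subinterval of $[0,\pi)$ one gets $2\sin(\theta/2)\ge c\,\theta$; in fact it is cleaner to argue that for fine partitions $2\sin(\theta/2)$ is arbitrarily close to $\theta$. Hence the $Y$-length of $\gamma$ dominates (in the limit of fine partitions) the $X$-length of $t\mapsto x_t$, which is in turn at least $d_X(x_0,x_1)$. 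Taking the infimum over all admissible $\gamma$ yields $d_X(x_0,x_1)\le\ell$.

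The main obstacle is the lower-bound direction, and specifically handling the trigonometric distortion near $\theta=\pi$: the estimate $2\sin(\theta/2)\ge c\,\theta$ degrades as $\theta\to\pi$ only in the constant, not catastrophically (since $2\sin(\pi/2)=2>0$), so actually $2\sin(\theta/2)\ge \tfrac{2}{\pi}\theta$ holds uniformly on $[0,\pi]$ by concavity of $\sin$ on $[0,\pi/2]$ — wait, one must be slightly careful since $\theta/2\in[0,\pi/2]$ and $\sin$ is concave there, giving $\sin(\theta/2)\ge \tfrac{2}{\pi}\cdot\tfrac{\theta}{2}\cdot\tfrac{\pi/2}{\pi/2}$; the clean statement is $\sin s\ge \tfrac{2}{\pi}s$ for $s\in[0,\pi/2]$, hence $2\sin(\theta/2)\ge \tfrac{2}{\pi}\theta$. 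This already gives $\ell\ge\tfrac{2}{\pi}d_X(x_0,x_1)$ with no fineness assumption, but to get the sharp constant $1$ one refines the partition: since $\gamma$ is continuous and stays in the compact-radial region, the $x_t$ vary continuously in $X$, so $\max_i d_X(x_{t_{i-1}},x_{t_i})\to 0$ as the mesh $\to0$, and then $2\sin(\theta/2)/\theta\to1$ uniformly over the increments, giving $Y\text{-length}(\gamma)\ge \limsup_{\text{mesh}\to0}\sum_i d_X(x_{t_{i-1}},x_{t_i})\cdot(1-o(1))\ge d_X(x_0,x_1)$. One should also record at the outset why continuity of $\gamma$ into $(Y,d_Y)$ forces $t\mapsto x_t$ to be continuous into $(X,d_X)$: away from the apex, $d_Y([x,1],[x',1])=2\sin(\tfrac12 d_X(x,x'))$ is, for $d_X\le\pi$, a continuous strictly increasing function of $d_X\in[0,\pi]$, so $d_Y\to0 \iff d_X\to0$. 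With these observations in place the two inequalities combine to the claimed identity; the hypothesis that $X$ is a length space is used only in the easy direction, to pass from the infimum over curves to the distance.
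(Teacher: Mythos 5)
Your proof is correct and follows essentially the same route as the paper's: both directions rest on comparing the cone chord $2\sin\bigl(\tfrac12 d_X\bigr)$ with $d_X$, using $2\sin(\theta/2)\le\theta$ for the inequality $\ell\le d_X(x_0,x_1)$ (plus the length-space property of $X$) and the small-angle asymptotics $2\sin(\theta/2)\ge(1-\epsilon)\theta$ on short pieces for the reverse. The only difference is that where the paper invokes the exercise $L_Y([x_t,1])\ge 2\sin\bigl(L_X(x_t)/2\bigr)$ from Burago--Burago--Ivanov together with a subdivision argument, you unpack the same content into an explicit partition estimate combined with the triangle inequality in $X$, which makes your version self-contained.
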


\begin{proof}   Denote by $J(x_0,x_1)$ the infimum of $Y$-lengths of curves $[x_t,1]$  as in the statement of the lemma. Observe from \eqref{cone} that $d_{X}(x_+,x_-)\geq d_{\mathfrak C(X)}([x_+,1],[x_-,1])$ for any $x_+,x_-\in X$. Hence, the $Y$-length of any curve $[x_t,1]$ is less than or equal to the $X$-length of $x_t$. We claim that they are actually equal. It suffices to prove \begin{equation}\label{metL} L_Y([x_t,1])\geq q L_X(x_t)\end{equation} for any $q<1$. By linearity of \eqref{metL}, it is enough to prove it for curves of sufficiently small length.  From \cite[Ex. 3.6.14]{Bur} we infer  that \begin{equation*}L_Y([x_t,1])\geq 2 \sin\left( L_X(x_t)/2\right),\end{equation*} which yields \eqref{metL} for short curves. Since $X$ is a length space, we immediately conclude that  $J(x_0,x_1)= d_{X}(x_0,x_1)$. \end{proof}

We are going to show that the cone over the metric space $(\Ma,d_{SKB}/2)$ coincides with $(\Mm,d_{KB}/2)$. In other words, $(\Ma,d_{SKB}/2)$ is a sphere in the cone $(\Mm,d_{KB}/2)$, hence the name ``spherical distance''. Firstly, for any element $G\in \Mm$, we set $$r=r(G):=\sqrt{m(G)}=\sqrt{Tr\,\rd\rhp (\R^d)}.$$ Then we can identify $G$ with a pair $[G/r^2,r]\in \mathfrak C(\Ma)$. 

\begin{theo}[Conic structure] \label{th:cone} 
The space $(\Ma,d_{SKB})$ is a geodesic space of diameter $\le \pi$, while $(\Mm,d_{KB}/2)$ is a metric cone over $(\Ma,d_{SKB}/2)$, where $\Mm$ is identified with $\mathfrak C(\Ma)$ via $G\simeq [G/r^2,r]$. 
\end{theo}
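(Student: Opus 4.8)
The plan is to exhibit the map $G\simeq[G/r^{2},r]$, with $r=r(G)=\sqrt{Tr\,\rd G(\R^{d})}$, as an isometry from $(\Mm,d_{KB}/2)$ onto the cone $\mathfrak C(\Ma,\varrho)$, where $\varrho:=\min\{\tfrac12 d_{SKB},\pi\}$ is the truncation of $\tfrac12 d_{SKB}$ at $\pi$. This $\varrho$ is a metric of diameter $\le\pi$ on $\Ma$ (truncating a metric preserves the metric axioms), so $\mathfrak C(\Ma,\varrho)$ is well defined via \eqref{cone} irrespective of the --- as yet unproven --- diameter bound for $d_{SKB}$; that the truncation is vacuous (equivalently, $\operatorname{diam}(\Ma,d_{SKB})\le\pi$) will drop out at the end. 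The computational core is an elementary identity. If $\sigma_{t}\in\Ma$ solves the $SKB$ constraint with potential $(\hat u_{t},\hat U_{t})$ --- whence $\int_{\R^{d}}\rd\sigma_{t}:\hat U_{t}\equiv0$, as one sees by testing against $\Phi\equiv I$ --- and $r_{t}>0$, then $G_{t}:=r_{t}^{2}\sigma_{t}$ solves the $KB$ constraint \eqref{consrt} with potential $(\hat u_{t},\,\hat U_{t}+2\tfrac{\dot r_{t}}{r_{t}}I)$, and the two energy densities obey $e_{t}=4\dot r_{t}^{2}+r_{t}^{2}\varepsilon_{t}$, where $\varepsilon_{t}:=\int\rd\sigma_{t}(\hat u_{t}\cdot\hat u_{t}+\hat U_{t}:\hat U_{t})$; conversely, any $KB$ curve with $\inf_{t}Tr\,\rd G_{t}(\R^{d})>0$ has this form with $m_{t}=Tr\,\rd G_{t}(\R^{d})$, $r_{t}=\sqrt{m_{t}}$, $\sigma_{t}=G_{t}/m_{t}$ (one records that $m_{t}$ is absolutely continuous with $\dot m_{t}=\int\rd G_{t}:U_{t}\in L^{2}(0,1)$, and that $\dot m_{t}^{2}\le m_{t}e_{t}$ by Cauchy--Schwarz, so $\varepsilon_{t}\ge0$). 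Hence $\tfrac14 E[G;\u]=\int_{0}^{1}(\dot r_{t}^{2}+\tfrac14 r_{t}^{2}\varepsilon_{t})\,\rd t$ along every such curve.

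Write $d_{\mathfrak C}$ for the distance on $\mathfrak C(\Ma,\varrho)$. The claim is $\tfrac14 d_{KB}^{2}(G_{0},G_{1})=d_{\mathfrak C}^{2}([\sigma_{0},r_{0}],[\sigma_{1},r_{1}])$, proved by two inequalities. For ``$\ge$'', fix an admissible $KB$ curve from $G_{0}$ to $G_{1}$ of finite energy $E$. If $\inf_{t}m_{t}=0$, cut the curve at a time of (almost) vanishing mass and combine Corollary~\ref{geodesictozero} with the time-rescaling Cauchy--Schwarz trick of Lemma~\ref{l:hk} to get $\tfrac14 E\ge(r_{0}+r_{1})^{2}\ge r_{0}^{2}+r_{1}^{2}-2r_{0}r_{1}\cos\varrho(\sigma_{0},\sigma_{1})=d_{\mathfrak C}^{2}$; otherwise $\sigma_{t}=G_{t}/m_{t}$ is a finite-energy $SKB$ curve, hence $d_{SKB}$-H\"older (analogue of Lemma~\ref{l:hk}), so $\gamma_{t}:=[\sigma_{t},r_{t}]$ is a genuine curve in $\mathfrak C(\Ma,\varrho)$, its metric derivative is $|\dot\gamma_{t}|^{2}=\dot r_{t}^{2}+r_{t}^{2}|\dot\sigma_{t}|_{\varrho}^{2}$, and $|\dot\sigma_{t}|_{\varrho}^{2}\le\tfrac14|\dot\sigma_{t}|_{d_{SKB}}^{2}\le\tfrac14\varepsilon_{t}$ a.e.\ (the last bound by rescaling a sub-curve as in \eqref{consrt}); so the first-paragraph identity yields $\tfrac14 E=\int_{0}^{1}(\dot r_{t}^{2}+\tfrac14 r_{t}^{2}\varepsilon_{t})\,\rd t\ge\int_{0}^{1}|\dot\gamma_{t}|^{2}\,\rd t\ge L_{\mathfrak C}(\gamma)^{2}\ge d_{\mathfrak C}^{2}$. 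Taking the infimum over competitors gives ``$\ge$''. For ``$\le$'', given $\delta>0$ pick an $SKB$-admissible curve $\tilde\sigma$ from $\sigma_{0}$ to $\sigma_{1}$ with $\int_{0}^{1}\tilde\varepsilon_{t}\,\rd t\le d_{SKB}^{2}(\sigma_{0},\sigma_{1})+\delta$, so $\ell_{\delta}:=\int_{0}^{1}\sqrt{\tilde\varepsilon_{t}}\,\rd t$ satisfies $d_{SKB}\le\ell_{\delta}\le\sqrt{d_{SKB}^{2}+\delta}$; reparametrise $\tilde\sigma$ in time and choose $r_{t}$ so that $t\mapsto r_{t}(\cos\phi_{t},\sin\phi_{t})\in\R^{2}$, with $\dot\phi_{t}:=\tfrac12\dot s(t)\sqrt{\tilde\varepsilon_{s(t)}}$, traces the straight segment between two points at radii $r_{0},r_{1}$ subtending the angle $\min\{\tfrac12\ell_{\delta},\pi\}$ --- the optimal radial profile in a metric cone (cf.\ Lemma~\ref{charsp} and \cite{BH99,Bur}; in the --- ultimately vacuous --- case $\tfrac12\ell_{\delta}>\pi$ one instead invokes Proposition~\ref{p:impr}). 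By the first-paragraph identity this is a $KB$-admissible curve from $G_{0}$ to $G_{1}$ of energy $4\int_{0}^{1}(\dot r_{t}^{2}+\tfrac14 r_{t}^{2}\varepsilon_{t})\,\rd t=4\big(r_{0}^{2}+r_{1}^{2}-2r_{0}r_{1}\cos(\min\{\tfrac12\ell_{\delta},\pi\})\big)$; letting $\delta\to0$ and using continuity of $\cos$ and of the truncation gives ``$\le$''. Degenerate situations --- $r_{0}$ or $r_{1}$ zero, $\tilde\varepsilon$ vanishing on a set, the segment through the origin --- are absorbed by the usual mollification and limiting arguments.

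It follows that $G\simeq[G/r^{2},r]$ is an isometry of $(\Mm,d_{KB}/2)$ onto $\mathfrak C(\Ma,\varrho)$. Specialising to $\sigma_{0},\sigma_{1}\in\Ma$ (so $r_{0}=r_{1}=1$) gives $\tfrac14 d_{KB}^{2}(\sigma_{0},\sigma_{1})=2-2\cos\varrho(\sigma_{0},\sigma_{1})$, while Proposition~\ref{p:impr} gives $d_{KB}^{2}(\sigma_{0},\sigma_{1})\le8$; hence $\cos\varrho(\sigma_{0},\sigma_{1})\ge0$, so $\varrho(\sigma_{0},\sigma_{1})\le\tfrac{\pi}{2}<\pi$, so $\varrho(\sigma_{0},\sigma_{1})=\tfrac12 d_{SKB}(\sigma_{0},\sigma_{1})$ and $d_{SKB}(\sigma_{0},\sigma_{1})\le\pi$. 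Thus the truncation is vacuous, $\varrho\equiv\tfrac12 d_{SKB}$, $(\Mm,d_{KB}/2)=\mathfrak C(\Ma,d_{SKB}/2)$, and $(\Ma,d_{SKB})$ has diameter $\le\pi$. Finally, $(\Ma,d_{SKB})$ is geodesic: since $(\Ma,d_{SKB}/2)$ has diameter $\le\tfrac{\pi}{2}<\pi$ and its cone $(\Mm,d_{KB}/2)$ is geodesic by Theorem~\ref{theo:exist_geodesics} (rescaling the metric by a constant is immaterial), any minimizing $(d_{KB}/2)$-geodesic joining two points of the sphere $\Ma\times\{1\}$ avoids the apex --- passing through it costs at least $1+1=2$ by Corollary~\ref{geodesictozero}, whereas $d_{KB}(\sigma_{0},\sigma_{1})/2\le\sqrt2$ by Proposition~\ref{p:impr} --- and so projects to a minimizing geodesic of $(\Ma,d_{SKB}/2)$, by the standard correspondence of geodesics in cones over spaces of diameter $<\pi$ \cite{BH99,Bur}. (Alternatively one may prove $(\Ma,d_{SKB})$ geodesic directly, by repeating the arguments of Theorems~\ref{theo:d_LSC_weak*} and~\ref{theo:exist_geodesics}.)

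The main obstacle I foresee is not the algebraic identity $e_{t}=4\dot r_{t}^{2}+r_{t}^{2}\varepsilon_{t}$, which is routine, but two bookkeeping points: first, the regularity needed to legitimately split off the mass $m_{t}$ (absolute continuity of $m_{t}$, $\dot m_{t}\in L^{2}$, $d_{SKB}$-continuity of $G_{t}/m_{t}$ whenever $m_{t}$ is bounded below) and to treat competitors whose mass degenerates to zero; second, matching the radial optimisation with the classical geodesics of a metric cone --- i.e.\ checking that the infimum over $SKB$-admissible curves $\sigma$ and radial profiles $r$ of $\int_{0}^{1}(\dot r_{t}^{2}+\tfrac14 r_{t}^{2}\varepsilon_{t})\,\rd t$ is exactly $d_{\mathfrak C}^{2}$, which forces one to work with near-geodesic, suitably time-reparametrised $SKB$ paths along which $\varepsilon_{t}$ can be made to follow $|\dot\sigma_{t}|_{d_{SKB}}^{2}$.
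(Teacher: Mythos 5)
Your argument is correct in substance, but it takes a genuinely different route from the paper's. You build the cone isometry by hand: you split every admissible curve into a radial part $r_t=\sqrt{m_t}$ and a normalized part $\sigma_t=G_t/m_t$, verify the pointwise energy identity $e_t=4\dot r_t^2+r_t^2\varepsilon_t$, and then perform a two-sided optimization over radial profiles (planar development onto a straight segment for the upper bound; the metric-derivative and length estimate in the cone for the lower bound), using the truncation $\varrho=\min\{\tfrac12 d_{SKB},\pi\}$ as a device to keep the cone well defined until the diameter bound becomes available. The paper instead proves only the scaling identity $d_{KB}^2(r_0^2G_0,r_1^2G_1)=r_0r_1\,d_{KB}^2(G_0,G_1)+4(r_0-r_1)^2$ --- obtained from the same lift $(\tilde G_t,\tilde U_t,\tilde u_t)$ via the reparametrization $a(t)=r_1t/(r_0+(r_1-r_0)t)$, i.e.\ the same algebra as your energy identity, but with no radial optimization at all --- and then outsources the rest to the characterization of metric cones by this scaling property (\cite{LM17}, Theorem 2.2), to Lemma~\ref{charsp} for identifying the spherical distance with $d_{SKB}/2$, and to Corollary~\ref{geodesictozero} for $r_0r_1=0$. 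Both routes then use Proposition~\ref{p:impr} to force $\cos\varrho\ge0$, hence diameter $\le\pi/2<\pi$, and combine Theorem~\ref{theo:exist_geodesics} with \cite[Corollary 5.11]{BH99} to make $(\Ma,d_{SKB})$ geodesic. What the paper's route buys is precisely the avoidance of the two bookkeeping issues you flag at the end (the a.e.\ metric-derivative formula $|\dot\gamma_t|^2=\dot r_t^2+r_t^2|\dot\sigma_t|_\varrho^2$ for curves in the cone, and the matching of the radial optimization with the classical cone geodesics); what your route buys is self-containedness --- you effectively reprove the relevant half of \cite[Theorem 2.2]{LM17} in this setting --- together with an explicit optimal radial profile. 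Your technical caveats are real but surmountable: the localized form of Lemma~\ref{l:hk} does give $E_{[s,t]}\ge d_{KB}^2(G_s,G_t)/(t-s)$, which makes your degenerate-mass bound $\tfrac14E\ge r_0^2/t_*+r_1^2/(1-t_*)\ge(r_0+r_1)^2$ legitimate, and testing \eqref{consrt} with $\Phi\equiv I$ does yield $\dot m_t=\int_{\R^d}\rd G_t:U_t\in L^2(0,1)$ as you claim.
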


\begin{proof} {\it Step 1.} We first observe that it suffices to show  the weaker claim that $(\Mm,d_{KB}/2)$ is  a metric cone over some metric space (which, due to the identification above, is nothing but $\Ma$ equipped with some distance $d$). Indeed, by Proposition \ref{p:impr}, for any two matrix measures $G_0,G_1\in \Ma$ one has \begin{equation} \label{hkbound} d_{KB}(G_0,G_1)/2\leq \sqrt 2.\end{equation} If $(\Mm,d_{KB}/2)$ is a cone over $(\Ma,d)$, \eqref{hkbound} and \eqref{cone} imply that $\cos(d(G_0,G_1))\geq 0$, whence the diameter of $(\Ma,d)$ is controlled from above by $\pi/2<\pi$. By Theorem \ref{theo:exist_geodesics} and \cite[Corollary 5.11]{BH99}, $(\Ma,d)$ is  a geodesic space. Evoking Lemma \ref{charsp} and Definition \ref{d:metr2}, we see that $d$ actually coincides with $d_{SKB}/2$.

 {\it Step 2.} In view of \eqref{hkbound} and \cite[Theorem 2.2]{LM17}, in order to prove the weaker claim discussed above it suffices to establish the following scaling property that characterizes the cones:
 \begin{equation}\label{scaling} d_{KB}^2(r_0^2G_0,r_1^2G_1)=r_0r_1d_{KB}^2(G_0,G_1)+4(r_0-r_1)^2,\end{equation} for all $G_0,G_1\in \Ma$, $r_0,r_1\geq 0$. Note that we have already proved it in the case $r_0r_1=0$ (see Corollary \ref{geodesictozero}), so we can assume that $r_0r_1>0$. Consider the scalar function $a(t)=\frac {r_1 t}{r_0+(r_1-r_0)t}$. Then $$a(0)=0,\ a(1)=1,\ a'(t)(r_0+(r_1-r_0)t)^2=r_0r_1.$$  We will also need its inverse function $t(a)$. 
 
 Let $(G_t,U_t,u_t)$ be any admissible path from $\mathcal A_1$ joining $G_0,G_1\in \Ma$. Then the path $(\tilde G_t,\tilde U_t, \tilde u_t)$, where \begin{gather*}\tilde G_t = (r_0+(r_1-r_0)t)^2 G_{a(t)},\\\tilde U_t=a'(t)U_{a(t)}+ \frac {2(r_1-r_0)}{r_0+(r_1-r_0)t}I,\\ \tilde u_t=a'(t)u_{a(t)}, 
 \end{gather*} connects $r_0^2G_0$ and $r_1^2G_1$. A straightforward computation shows that  $(\tilde G_t,\tilde U_t, \tilde u_t)$ satisfies the constraint \eqref{consrt}. Testing \eqref{consrt} with $\Phi_a=(r_0+(r_1-r_0)t(a))I$, we infer \begin{multline}\label{num4.8} (r_0+(r_1-r_0)t(1))\int_{\R^d} \rd \rhp_{1} : I -(r_0+(r_1-r_0)t(0))\int_{\R^d} \rd \rhp_{0} : I\\
 - (r_1-r_0)\int_0^1 t'(a)\int_{\R^d} \rd \rhp_{a} : I \,\rd a\\= \int_0^1(r_0+(r_1-r_0)t(a))\int_{\R^d} \rd \rhp_{a} : U_{a} \,\rd a. \end{multline}
 
 Let us compute the energy of the path $\tilde G_t$, employing \eqref{num4.8}: \begin{multline}E[\tilde G_t;\tilde U_t, \tilde u_t]=\int_0^1\left(\int_{\R^d}\rd \tilde\rhp_t\tilde u_t \cdot \tilde u_t +\int_{\R^d} \rd \tilde \rhp_t\tilde U_t:\tilde U_t\right) \rd t\\= r_0r_1\int_0^1a'(t)\left(\int_{\R^d}\rd \rhp_{a(t)} u_{a(t)} \cdot u_{a(t)}+\int_{\R^d} \rd \rhp_{a(t)} U_{a(t)}: U_{a(t)}\right) \,\rd t \\ + 4 (r_1-r_0)\int_0^1a'(t)(r_0+(r_1-r_0)t)\int_{\R^d} \rd \rhp_{a(t)} : U_{a(t)} \,\rd t\\+ 4 (r_1-r_0)^2\int_0^1\int_{\R^d} \rd \rhp_{a(t)} : I \,\rd t\\ =r_0r_1\int_0^1\left(\int_{\R^d}\rd \rhp_{a} u_{a} \cdot u_{a}+\int_{\R^d} \rd \rhp_{a} U_{a}: U_{a}\right) \,\rd a \\ + 4 (r_1-r_0)\int_0^1(r_0+(r_1-r_0)t(a))\int_{\R^d} \rd \rhp_{a} : U_{a} \,\rd a\\+ 4 (r_1-r_0)^2\int_0^1 t'(a)\int_{\R^d} \rd \rhp_{a} : I \,\rd a
 \\=r_0r_1 E[G_t;U_t, u_t]\\ +4 (r_1-r_0)(r_0+(r_1-r_0)t(1))\int_{\R^d} \rd \rhp_{1} : I\\ -4 (r_1-r_0)(r_0+(r_1-r_0)t(0))\int_{\R^d} \rd \rhp_{0} : I \\=r_0r_1E[G_t;U_t, u_t]+4(r_0-r_1)^2.\end{multline}
 
 Consequently,  $d_{KB}^2(r_0^2G_0,r_1^2G_1)\leq r_0r_1d_{KB}^2(G_0,G_1)+4(r_0-r_1)^2$. The opposite inequality is proved in a similar fashion.  \end{proof}
\appendix
\section{Frame-indifference} 
\label{find} 
The principle of material frame-indifference \cite{Tru} is one of the main
principles of rational mechanics, which expresses the fact that
the properties of a material do not depend on the choice of
an observer. An observer in rational mechanics is identified with a \emph{frame},
which is a correspondence between the spatial points and the
elements $x $ of the space $\mathbb{R}^d$, as well as between the
moments of time and the elements $\mathfrak t $ of the scalar axis
$\mathbb{R}$. The metrics in $\mathbb{R}^d$ and in the scalar axis, as well as the time direction, are assumed to be frame-invariant. Then the most
general change of coordinates is
\begin {gather*} \mathfrak t ^ * = \mathfrak t-\mathfrak t_0, \\ x ^ * = c^* (\mathfrak t) +Q_{\mathfrak t} x, \end {gather*} where $\mathfrak t_0\in\R$, $c ^
*: \R\to\R^d$,
$Q_{\mathfrak t}$ is a time-dependent orthogonal matrix.

Consider any vector that exists in the physical space irrespectively of the
observer. In the initial frame, it is represented by some $w\in \R^d$. Then in the new frame it is $w^*=Q_{\mathfrak t} w $.
A \emph{frame-indifferent tensor} is a linear automorphism of such vectors. The representations of a frame-indifferent tensor function in the two frames are related as
$$T ^ *(\mathfrak t^*,x^*) = Q_{\mathfrak t} T(\mathfrak t,x)Q_{\mathfrak t} ^\top.$$

We claim that our distance $d_{KB}$ complies with the frame indifference: \begin{equation} \label{frame-ind}d_{KB}\left(T^*_0(\mathfrak t^*,x^*), T^*_1(\mathfrak t^*,x^*)\right)=d_{KB}\left(T_0(\mathfrak t,x), T_1(\mathfrak t,x)\right).\end{equation} In other words, $d_{KB}$ may be considered as a distance on positive-semidefinite-frame-indifferent-tensor-valued measures. 

To prove the claim it suffices to note that for any admissible path $(T_t,U_t,u_t)(\mathfrak t,x)$ in the old frame, the path $$(T_t^*,U_t^*,u_t^*)(\mathfrak t^*,x^*):=\left(Q_{\mathfrak t} T_t(\mathfrak t,x)Q_{\mathfrak t} ^\top,Q_{\mathfrak t} U_t(\mathfrak t,x)Q_{\mathfrak t} ^\top, Q_{\mathfrak t} u_t(\mathfrak t,x)\right)$$ is admissible in the new frame, and has the same energy \eqref{e:mini}. These assertions can be verified by a straightforward computation: the only non-obvious issue for the validity of \eqref{consrt} in the new frame is that the spatial gradient is frame-indifferent:  
$$\nabla_{x^*} w^*=Q_{\mathfrak t}(\nabla_{x} w)Q_{\mathfrak t} ^\top$$ provided $w^*=Q_{\mathfrak t} w $,
which is just a manifestation of the chain rule, cf., e.g., \cite{Tru,ZV08}.
\section{Some technical facts}
\label{section:appendixa}

\begin{prop}[Refined Banach-Alaoglu \cite{KMV16A}]\label{Ban}
Let $(X,\|\cdot\|)$ be a separable normed vector space. Assume that there exists a sequence of seminorms $\{\|\cdot\|_k\}$ ($k=0,1,2,\dots$) on $X$ such that for every $x\in X$ one has 
$$
\|x\|_k\leq C \|x\|
$$
with a constant $C$ independent of $k,x$, and
$$
\|x\|_k\underset{k\to\infty}{\rightarrow} \|x\|_0.
$$
Let $\varphi_k$ ($k=1,2,\dots$) be a uniformly bounded sequence of linear continuous functionals on $(X,\|\cdot\|_k)$, resp., in the sense that 
$$
c_k:=\|\varphi_k\|_{(X,\|\cdot\|_k)^*}\leq C.
$$
Then the sequence $\{\varphi_k\}$ admits a converging subsequence $\varphi_{k_n}\to \varphi_0$ in the weak-$*$ topology of $X^*$, and
\begin{equation} \label{e:liminfp}
\|\varphi_0\|_{(X,\|\cdot\|_0)^*}\leq c_0:=\liminf\limits_k c_{k}.
\end{equation}
\end{prop}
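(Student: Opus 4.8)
The plan is to prove the Refined Banach-Alaoglu statement (Proposition \ref{Ban}) by first extracting a weak-$*$ limit using the \emph{ordinary} Banach-Alaoglu theorem and then checking the refined norm bound \eqref{e:liminfp} by hand. First I would observe that since $\|x\|_k\le C\|x\|$ for all $k$, the functionals $\varphi_k$, which are bounded on $(X,\|\cdot\|_k)$ with $c_k\le C$, are automatically bounded on $(X,\|\cdot\|)$: indeed $|\varphi_k(x)|\le c_k\|x\|_k\le C\cdot C\|x\|$, so $\|\varphi_k\|_{(X,\|\cdot\|)^*}\le C^2$. Thus $\{\varphi_k\}$ is a bounded sequence in the dual of the separable normed space $(X,\|\cdot\|)$. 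By the sequential Banach-Alaoglu theorem (valid because $X$ is separable, so bounded sets in $X^*$ are weak-$*$ sequentially compact and metrizable), there is a subsequence $\varphi_{k_n}\to\varphi_0$ in the weak-$*$ topology of $X^*$, i.e. $\varphi_{k_n}(x)\to\varphi_0(x)$ for every $x\in X$.

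Next I would establish the quantitative bound \eqref{e:liminfp}. Passing to a further subsequence, I may assume that $c_{k_n}\to\liminf_k c_k=:c_0$. Fix $x\in X$. For each $n$ we have $|\varphi_{k_n}(x)|\le c_{k_n}\|x\|_{k_n}$. Letting $n\to\infty$, the left side converges to $|\varphi_0(x)|$, while $c_{k_n}\to c_0$ and $\|x\|_{k_n}\to\|x\|_0$ by hypothesis; hence $|\varphi_0(x)|\le c_0\|x\|_0$. Since $x\in X$ was arbitrary, this says precisely that $\varphi_0$ is bounded on $(X,\|\cdot\|_0)$ with operator norm at most $c_0$, which is \eqref{e:liminfp}. (In particular $\varphi_0$ annihilates the kernel of $\|\cdot\|_0$, so it descends to a continuous functional on the quotient $X/N_0$ and hence on its completion, which is how it is used in the proof of Theorem \ref{theo:d_LSC_weak*}.)

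A couple of routine points deserve a sentence. First, the subsequence realizing $\liminf_k c_k$ and the subsequence along which $\varphi_k$ converges weakly-$*$ can be chosen compatibly: first pass to a subsequence with $c_{k_n}\to c_0$, then extract a further weak-$*$ convergent sub-subsequence; the $\liminf$ of $c_k$ along any subsequence is still $\ge c_0$, and along our chosen one it equals $c_0$, so the bound we get is in fact with the full $\liminf$ over $k$, as stated. Second, one should note that $\varphi_0$ need not a priori be the weak-$*$ limit in $(X,\|\cdot\|_0)^*$ in any canonical topology — the content of the statement is only that a limit exists in $(X,\|\cdot\|)^*$ and happens to be continuous for the (generally stronger-dual) norm $\|\cdot\|_0^*$, with the stated norm control.

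The main obstacle, such as it is, is organizational rather than deep: one must be careful that the three limiting operations — the choice of a subsequence along which $c_k$ tends to its $\liminf$, the weak-$*$ extraction, and the pointwise passage $\|x\|_{k}\to\|x\|_0$ — are interleaved in the right order so that the final inequality genuinely features $\liminf_k c_k$ and not merely $\limsup$ along the extracted subsequence. Everything else (separability $\Rightarrow$ sequential weak-$*$ compactness of bounded sets in $X^*$, and the elementary inequality $|\varphi_{k_n}(x)|\le c_{k_n}\|x\|_{k_n}$) is standard.
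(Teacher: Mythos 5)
Your proof is correct, and the ordering of the three limiting operations (first a subsequence realizing $\liminf_k c_k$, then weak-$*$ extraction via sequential Banach--Alaoglu for the separable space $(X,\|\cdot\|)$, then the pointwise passage $c_{k_n}\|x\|_{k_n}\to c_0\|x\|_0$) is handled properly. The paper itself gives no proof of this proposition, deferring entirely to the cited reference \cite{KMV16A}, where the argument is essentially the same standard one you present, so there is nothing to compare beyond noting that your write-up fills in exactly the expected details.
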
 

\begin{lem}\label{lem:d_BL_lsc_w*}
The matricial bounded-Lipschitz distance $d_{BL}$ is sequentially lower semicontinuous with respect to the weak-$*$ topology. 
\end{lem}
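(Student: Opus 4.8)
The plan is to realize $d_{BL}(\rhp_0,\rhp_1)$ as a supremum of affine functionals each of which is \emph{continuous} along weak-$*$ convergent sequences, after which lower semicontinuity is automatic. Fix weak-$*$ convergent sequences $\rhp_0^k\to\rhp_0$ and $\rhp_1^k\to\rhp_1$ in $\Mm$ and an arbitrary $\eps>0$. The limit measures $\rhp_0,\rhp_1$ are finite $\Po$-valued measures, hence tight; therefore, as already recorded in the discussion of $d_{BL}$ in the preliminaries, in the definition of $d_{BL}(\rhp_0,\rhp_1)$ one may restrict the supremum to test functions $\Phi\in\mathcal C^\infty_c(\R^d;\Sm)$ with $\|\Phi\|_{\operatorname{Lip}}\le1$. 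Choose such a $\Phi$ with
$$
\left|\int_{\R^d}\Phi:(\rd\rhp_1-\rd\rhp_0)\right|\ge d_{BL}(\rhp_0,\rhp_1)-\eps .
$$

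Since $\Phi\in\mathcal C^\infty_c(\R^d;\Sm)\subset\mathcal C_0(\R^d;\Sm)$, the defining property of weak-$*$ convergence yields $\int_{\R^d}\Phi:\rd\rhp_i^k\to\int_{\R^d}\Phi:\rd\rhp_i$ for $i=0,1$, and hence
$$
\int_{\R^d}\Phi:(\rd\rhp_1^k-\rd\rhp_0^k)\underset{k\to\infty}{\longrightarrow}\int_{\R^d}\Phi:(\rd\rhp_1-\rd\rhp_0).
$$
On the other hand, $\|\Phi\|_{\operatorname{Lip}}\le1$ forces $\bigl|\int_{\R^d}\Phi:(\rd\rhp_1^k-\rd\rhp_0^k)\bigr|\le d_{BL}(\rhp_0^k,\rhp_1^k)$ for every $k$. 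Combining the last two facts,
$$
d_{BL}(\rhp_0,\rhp_1)-\eps\le\left|\int_{\R^d}\Phi:(\rd\rhp_1-\rd\rhp_0)\right|=\lim_{k\to\infty}\left|\int_{\R^d}\Phi:(\rd\rhp_1^k-\rd\rhp_0^k)\right|\le\liminf_{k\to\infty}d_{BL}(\rhp_0^k,\rhp_1^k),
$$
and letting $\eps\downarrow0$ gives the claim.

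The only genuinely non-trivial ingredient is the reduction of the supremum to compactly supported smooth test functions: a general competitor in the definition of $d_{BL}$ is merely bounded and Lipschitz, so its pairing with $\rd\rhp^k$ need not converge under weak-$*$ (i.e., $\mathcal C_0$-) testing. This is exactly repaired by tightness of the finite-mass pair $\{\rhp_0,\rhp_1\}$: truncating $\Phi$ by a smooth cutoff equal to $1$ on $B_R$ and supported in $B_{2R}$ with gradient $O(1/R)$, and renormalizing by $(1+O(1/R))^{-1}$ to preserve the constraint $\|\cdot\|_{\operatorname{Lip}}\le1$; dominated convergence (the measures being finite) then shows the truncated pairings approximate $\int_{\R^d}\Phi:(\rd\rhp_1-\rd\rhp_0)$. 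Since this is precisely the fact invoked in the preliminaries, in the actual write-up it may simply be quoted rather than reproved.
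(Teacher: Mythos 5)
Your argument is correct and is exactly the paper's proof: the paper disposes of this lemma in one line by noting that the supremum defining $d_{BL}$ can be restricted to smooth compactly supported test functions (a fact recorded in the preliminaries via tightness), each of which pairs continuously with weak-$*$ convergent sequences, so the supremum is lower semicontinuous. Your write-up merely makes the $\eps$-bookkeeping and the cutoff justification explicit.
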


The proof is obvious since the supremum in the definition of $d_{BL}$ can be restricted to smooth compactly supported functions, which are dense in $\mathcal C_0$.

\begin{lem}[Refined Hopf-Rinow]
\label{hrt}
Let $(X,\varrho)$ be a metric space where every two points almost admit a midpoint. Assume that there exists a Hausdorff topology $\sigma$ on $X$ such that $\varrho$-bounded sequences contain $\sigma$-converging subsequences, and $\varrho$ is sequentially lower semicontinuous with respect to $\sigma$.  Then $(X,\varrho)$ is a geodesic space. 
\end{lem}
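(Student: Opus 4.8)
The plan is to run the classical Menger-type midpoint construction; the only non-routine point is that $X$ need not be complete, so the passage from a ``dyadic geodesic'' to a genuine one must be driven by the $\sigma$-compactness of $\varrho$-bounded sets rather than by completeness, with the $\sigma$-lower semicontinuity of $\varrho$ used to glue things together.

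\emph{Step 1 (genuine midpoints).} Fix $x,y\in X$, which we may assume distinct, and let $z_k$ be $k^{-1}$-almost-midpoints. Since $\varrho(x,z_k),\varrho(y,z_k)\le\tfrac12(\varrho(x,y)+k^{-1})$, the sequence $(z_k)$ is $\varrho$-bounded, hence a subsequence $\sigma$-converges to some $z\in X$. Plugging the constant sequence at $x$ (resp. at $y$) and this subsequence into the definition of sequential lower semicontinuity gives $\varrho(x,z)\le\tfrac12\varrho(x,y)$ and $\varrho(z,y)\le\tfrac12\varrho(x,y)$; combined with $\varrho(x,y)\le\varrho(x,z)+\varrho(z,y)$ both become equalities, so $z$ is a genuine midpoint of $x$ and $y$.

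\emph{Step 2 (dyadic geodesic and its extension).} Define $\gamma$ on the dyadic rationals of $[0,1]$ by iterated bisection: $\gamma(0)=x$, $\gamma(1)=y$, and $\gamma(\tfrac{2k+1}{2^{n+1}})$ a midpoint (Step 1) of $\gamma(\tfrac{k}{2^{n}})$ and $\gamma(\tfrac{k+1}{2^{n}})$. An induction based on the telescoping identity $\varrho(x,y)\le\sum_i\varrho(\gamma(\tfrac{i}{2^{n}}),\gamma(\tfrac{i+1}{2^{n}}))=\varrho(x,y)$ (which forces every elementary step to be tight) yields $\varrho(\gamma(s),\gamma(t))=|s-t|\,\varrho(x,y)$ for all dyadic $s,t$. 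To extend $\gamma$ to a general $t\in[0,1]$, choose dyadics $t_n\to t$; then $(\gamma(t_n))$ is $\varrho$-Cauchy, hence $\varrho$-bounded, so a subsequence $\sigma$-converges to some $z\in X$. The crucial point is that then the \emph{entire} sequence $\varrho$-converges to $z$: for $\eps>0$ pick $N$ with $\varrho(\gamma(t_n),\gamma(t_m))<\eps$ for $n,m\ge N$, and, holding $n\ge N$ fixed, apply sequential lower semicontinuity to the constant sequence at $\gamma(t_n)$ against the $\sigma$-convergent subsequence to get $\varrho(\gamma(t_n),z)\le\liminf_j\varrho(\gamma(t_n),\gamma(t_{n_j}))\le\eps$. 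Interleaving two approximating dyadic sequences shows $z$ is independent of all choices, so $\gamma(t):=z$ is well defined and agrees with the dyadic values.

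\emph{Step 3 (conclusion and the hard part).} For arbitrary $s,t\in[0,1]$, take dyadics $s_n\to s$, $t_n\to t$ and pass to the limit in $\varrho(\gamma(s_n),\gamma(t_n))=|s_n-t_n|\,\varrho(x,y)$ using the $\varrho$-convergences from Step 2, obtaining $\varrho(\gamma(s),\gamma(t))=|s-t|\,\varrho(x,y)$. Hence $\gamma$ is a $\varrho$-continuous curve from $x$ to $y$ whose length is exactly $\varrho(x,y)$, i.e. a constant-speed minimizing geodesic, and since $x,y$ were arbitrary $(X,\varrho)$ is geodesic. I expect the main obstacle to be precisely the extension in Step 2: without completeness, $\gamma(t)$ cannot be obtained by uniform continuity and must be produced by $\sigma$-compactness, and the delicate part is showing that the $\varrho$-Cauchy sequence $(\gamma(t_n))$ genuinely $\varrho$-converges to its $\sigma$-subsequential limit, and that this limit does not depend on the chosen approximating dyadics — both of which rely on the device of freezing one argument of the lower-semicontinuity inequality.
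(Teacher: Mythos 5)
Your proposal is correct and follows essentially the same route as the paper: genuine midpoints from $\sigma$-compactness of $\varrho$-bounded sequences plus lower semicontinuity, the standard dyadic bisection, and extension to all parameters by extracting a $\sigma$-limit of the $\varrho$-Cauchy dyadic approximants and passing to the limit via lower semicontinuity. The only cosmetic difference is that you prove the exact equality $\varrho(\gamma(s),\gamma(t))=|s-t|\varrho(x,y)$ on dyadics directly, while the paper records only the Lipschitz inequality, which upgrades to equality automatically for a curve joining the endpoints.
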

\begin{proof} Fix any two points $x_0,x_1\in X$. It suffices to join them by a curve $x_t$ such that \begin{equation}
\label{contcurve}
\varrho(x_t,x_{\bar t})\leq |t-\bar t|\varrho(x_0,x_1).
\end{equation} for all $t,\bar t\in[0,1]$ (which is a posteriori continuous). 

Let us first observe that every two points $x,y\in X$ admit a midpoint, that is, 
\begin{equation*}
\label{mdp}
\varrho(x,y)= 2 \varrho(x,z)=2\varrho(z,y).
\end{equation*} for some $z\in X$. 
Indeed, take any sequence $z_k$ of almost midpoints, i.e., 
\begin{equation*}
\label{amdp}
|\varrho(x,y)-2 \varrho(x,z_k)|\leq k^{-1},\quad |\varrho(x,y)-2 \varrho(y,z_k)|\leq k^{-1}.
\end{equation*} 
The sequence $\{z_k\}$ is $\varrho$-bounded, thus without loss of generality it $\sigma$-converges to some $z\in X$. Then $$2\varrho(x,z)\leq \lim_{k\to \infty} 2\varrho(x,z_k)= \varrho(x,y),$$
$$2\varrho(y,z)\leq \lim_{k\to \infty} 2\varrho(y,z_k)= \varrho(x,y).$$ But its is clear from the triangle inequality that the latter inequalities must be equalities. 

Let $Q=\{s\in[0,1]| \exists p\in \mathbb N:  2^p s\in \mathbb N\}$. With the existence of midpoints at hand, by a standard procedure \cite[p. 43]{Bur} one constructs points $x_s$ ($s\in Q$) satisfying \eqref{contcurve}, that is, the function $s\mapsto x_s$ is $\varrho(x_0,x_1)$-Lipschitz. Given any $t\in [0,1]$, we can approximate it by a sequence $\{s_n\}\in Q$. Since $s\mapsto x_s$ is Lipschitz on $Q$, $x_{s_n}$ is a $\varrho$-Cauchy sequence. Therefore it is $\varrho$-bounded, and admits a subsequence that $\sigma$-converges to some $x_t\in X$.  Due to the sequential lower semicontinuity of the distance $\varrho$, we can pass to the limit in \eqref{contcurve} for all $t,\bar t\in[0,1]$. \end{proof}

\begin{lem}[Refined Arzel\`a-Ascoli]
\label{L:aa}
Let $(X,\varrho)$ be a metric space. Assume that there exists a Hausdorff topology $\sigma$ on $X$ such that $\varrho$ is sequentially lower semicontinuous with respect to $\sigma$.  Let $(x^k)_t$, $t\in[0,1]$, be a sequence of curves lying in a common $\sigma$-sequentially compact set $K\subset X$.  Let it be equicontinuous in the sense that there exists a symmetric continuous function $\omega:[0,1]\times [0,1]\to\R_+$, $\omega(t,t)=0$, such that \begin{equation}
\label{eccurve}
\varrho((x^k)_t,(x^k)_{\bar t})\leq \omega(t,\bar t).
\end{equation} for all $t,\bar t\in[0,1]$. Then there exists a $\varrho$-continuous curve $x_t$ such that \begin{equation}
\label{eccurvelim}
\varrho(x_t,x_{\bar t})\leq \omega(t,\bar t),
\end{equation} and (up to a not relabelled subsequence) \begin{equation}
\label{conv}
(x^k)_t\to x_t 
\end{equation} for all $t\in[0,1]$ in the topology $\sigma$.
\end{lem}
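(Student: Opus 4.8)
The plan is to mimic the classical diagonal-extraction argument for Arzelà–Ascoli, using $\sigma$-sequential compactness of $K$ in place of the usual relative compactness and using the lower semicontinuity of $\varrho$ with respect to $\sigma$ to transfer the equicontinuity estimate \eqref{eccurve} to the limit curve. First I would fix a countable dense set $D=\{t_1,t_2,\dots\}\subset[0,1]$ (say the rationals, plus $0$ and $1$). For $t=t_1$, the sequence $(x^k)_{t_1}$ lies in $K$, hence admits a $\sigma$-convergent subsequence; passing to that subsequence, then extracting again at $t_2$, and iterating, a standard diagonal argument produces a single subsequence (not relabelled) along which $(x^k)_{t_j}\to x_{t_j}$ in $\sigma$ for every $t_j\in D$, where each limit $x_{t_j}$ lies in $K$.

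Next I would check that the map $t_j\mapsto x_{t_j}$ is $\omega$-uniformly continuous on $D$. Indeed, for $t_i,t_j\in D$ we have $\varrho((x^k)_{t_i},(x^k)_{t_j})\le\omega(t_i,t_j)$ for all $k$; since $(x^k)_{t_i}\overset{\sigma}{\to}x_{t_i}$ and $(x^k)_{t_j}\overset{\sigma}{\to}x_{t_j}$, the sequential lower semicontinuity of $\varrho$ with respect to $\sigma$ gives $\varrho(x_{t_i},x_{t_j})\le\liminf_k\varrho((x^k)_{t_i},(x^k)_{t_j})\le\omega(t_i,t_j)$. Because $\omega$ is continuous and vanishes on the diagonal, the assignment $t_j\mapsto x_{t_j}$ is uniformly continuous on the dense set $D$; as $(X,\varrho)$ is a metric space, it extends uniquely to a $\varrho$-continuous curve $x_t$, $t\in[0,1]$, and passing to the limit in the inequality (using continuity of $\omega$ and of $\varrho(\cdot,\cdot)$ along $\varrho$-convergent sequences) yields \eqref{eccurvelim} for all $t,\bar t\in[0,1]$.

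It remains to upgrade the pointwise $\sigma$-convergence from $D$ to all of $[0,1]$. Fix $t\in[0,1]$ and pick $t_j\in D$ with $t_j\to t$. I would like to say $(x^k)_t\to x_t$ in $\sigma$, but without a metric or a topological-vector-space structure on $\sigma$ this needs care; this is the step I expect to be the main obstacle. The clean way around it is to not claim convergence at every $t$ for one fixed subsequence a priori, but rather: given $t\notin D$, the sequence $(x^k)_t$ lies in the $\sigma$-sequentially compact set $K$, so along a further subsequence it $\sigma$-converges to some $y\in K$; then for each $t_j\in D$, lower semicontinuity gives $\varrho(y,x_{t_j})\le\liminf_k\varrho((x^k)_t,(x^k)_{t_j})\le\omega(t,t_j)$, and letting $t_j\to t$ forces $\varrho(y,x_t)=0$, i.e. $y=x_t$. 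Hence every $\sigma$-convergent subsequence of $(x^k)_t$ has limit $x_t$; invoking once more the $\sigma$-sequential compactness of $K$ (every subsequence has a sub-subsequence $\sigma$-converging, necessarily to $x_t$) one concludes that the full (already diagonal-extracted) sequence satisfies $(x^k)_t\overset{\sigma}{\to}x_t$, which is \eqref{conv}. The only delicate point, which I would state explicitly, is that "every subsequence has a sub-subsequence converging to $x_t$" implies "$(x^k)_t\to x_t$" — valid for sequences in a Hausdorff space whenever the relevant subsequences are all $\sigma$-convergent, which here is guaranteed by sequential compactness of $K$.
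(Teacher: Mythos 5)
Your overall strategy --- diagonal extraction over a countable dense set $D$, transfer of \eqref{eccurve} to the limit points via sequential lower semicontinuity of $\varrho$, and the sub-subsequence argument to obtain \eqref{conv} at every $t$ --- is the same as the paper's (whose proof is itself only a sketch, deferring the last step to Ambrosio--Gigli--Savar\'e). There is, however, one step that fails as written: in your second paragraph you define $x_t$ for $t\notin D$ by asserting that a uniformly continuous map on a dense subset of $[0,1]$ ``extends uniquely to a $\varrho$-continuous curve'' merely because $(X,\varrho)$ is a metric space. That extension requires the target to be complete: the images $x_{t_j}$, $t_j\to t$, form a $\varrho$-Cauchy sequence, but nothing guarantees it converges in $X$. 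Completeness is precisely the hypothesis this ``refined'' lemma is designed to drop --- see the paper's remark right after the proof, which notes that the classical version assumes $X$ complete.

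The good news is that your own third paragraph already contains the correct replacement. For $t\notin D$, use the $\sigma$-sequential compactness of $K$ to extract a $\sigma$-limit $y$ of $(x^k)_t$ along a further subsequence, and \emph{define} $x_t:=y$; lower semicontinuity then gives $\varrho(x_t,x_{t_j})\le\omega(t,t_j)$ for all $t_j\in D$. This (i) shows $x_t$ is independent of the chosen sub-subsequence, since two such limits $y,y'$ satisfy $\varrho(y,y')\le 2\omega(t,t_j)\to 0$; (ii) shows $x_{t_j}\to x_t$ in $\varrho$, whence the triangle inequality and the continuity of $\omega$ yield \eqref{eccurvelim} and the $\varrho$-continuity of the curve; and (iii) feeds directly into your sub-subsequence argument for \eqref{conv}, which is correct as stated. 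So the proof only needs a reordering: the limit point at times outside $D$ must be produced by compactness of $K$ together with lower semicontinuity, not by abstract completion.
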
 
\begin{proof} A standard Arzel\`a-Ascoli argument allows us to construct, for each rational number $t\in [0,1]$, some points $x_t$ so that \eqref{conv} holds up to a not relabelled subsequence. Due to the sequential lower semicontinuity of $\varrho$, estimate \eqref{eccurvelim} is true for all rational $t,\bar t \in [0,1]$. Approximating any point $t\in [0,1]$ by a sequence of rational numbers, by mimicking the reasoning from the proof of  Lemma \ref{hrt} we can construct a $\varrho$-continuous curve $x_t$ satisfying \eqref{eccurvelim} for every $t\in [0,1]$. To show that the convergence \eqref{conv} takes place for all $t\in [0,1]$, one just repeats the argument from \cite[last part of the proof of Proposition 3.3.1]{AGS06}. \end{proof}
\begin{remark} Lemma \ref{hrt}  has been proved in \cite{KMV16A} assuming that $X$ is a complete length space, which is redundant. Similarly, Lemma \ref{L:aa} has been proved in \cite{AGS06} assuming that $X$ is a complete metric space. \end{remark}

\section{Matricial Otto calculus and beyond}

We have seen in Remark \ref{constmatr} that some pieces of $(\Mm,d_{KB})$ are isometric to Riemannian manifolds. One can (at least formally) extend this geometry onto the whole $\Mm$ such that the corresponding geodesic distance coincides with $d_{KB}$. Namely, we can develop some kind of Otto calculus, cf. \cite{otto01,villani03topics,villani08oldnew}, on $(\Mm,d_{KB})$. Starting from this point, we are completely formal. As we observed in Section \ref{section:metric_space} and in Remark \ref{divrem}, the minimizing potentials in \eqref{e:mini} can be chosen to be of the form $\u=(U,\dive U)$. This suggests to define the tangent spaces as
$$
T_\rhp\Mm:=\left\{\Xi=\left(-\nabla(\rhp\dive U)+\rhp U\right)^{Sym},\  U(x)\in\Sm\right\}
$$
and
$$
\|\Xi\|_{T_\rhp\Mm}=\|U\|_{H^1_{\dive}(\rd\rhp;\Sm)}:=\left(\int_{\R^d}\rd \rhp\dive U\cdot 
\dive U +\int_{\R^d} \rd \rhp U:U\right)^{1/2}.
$$
Ignoring all smoothness issues, the operator
\begin{equation} \label{e:epde}
\Xi(U)=\left(-\nabla(\rhp\dive U)+\rhp U\right)^{Sym}
\end{equation}
is $H^1_{\dive}(\rd\rhp;\Sm)$-coercive, so the one-to-one correspondence between the tangent vectors $\Xi$ and potentials $\u=(U,\dive U)$ is well defined.
By polarization this defines a Riemannian metric on $T\Mm$, and $$
d_{KB}^2(\rhp_0,\rhp_1)=\inf\left\{\int_0^1\left\|\frac{d\rhp_t}{dt}\right\|_{T_{\rhp_t}\Mm}^2\,\mathrm{d}t\right\}.
$$

The gradients of functionals $\mathcal{F}:\Mm\to\R$ are given by
\begin{equation}
\label{eq:formula_gradient_d}
 \grad_{KB}\mathcal{F}(\rhp)=\left[-\nabla\left(\rhp\dive\frac{\delta \mathcal{F}}{\delta\rhp}\right)+\rhp\frac{\delta \mathcal{F}}{\delta\rhp}\right]^{Sym},
\end{equation}
 where $\frac{\delta \mathcal{F}}{\delta\rhp}$ denotes the first variation with respect to the Euclidean structure of $L^2(\R^d,\Sm)$ with the standard scalar product $\langle U,U \rangle=\int_{\R^d} U:U$. The gradient flows are matricial PDEs of the form
$$
\p_t \rhp=\left[\nabla\left(\rhp\dive\frac{\delta \mathcal{F}}{\delta\rhp}\right)-\rhp\frac{\delta \mathcal{F}}{\delta\rhp}\right]^{Sym}.
$$
The interesting driving functionals include the von Neumann entropy $$\mathcal F_{N}(G)=\int_{\R^d} G\log G -G$$ and the ``volume'' $$\mathcal F_{V}(G)=\int_{\R^d} \sqrt{\det G}.$$ The gradient flow of $\mathcal F_{N}$ is a sort of matricial ``heat flow'' with logarithmic reaction. Indeed, for $d=1$ it simply becomes $\p_t G=\p_{xx}G- G\log G$. The gradient flow of $\mathcal F_{V}$ has some similarities with the mean curvature flow (if we view $G_t$ as an evolving Riemannian metric on $\R^d$). Unfortunately, it is not a genuinely geometric flow since the latter ones are expected to be invariant with respect to diffeomorphisms of $\R^d$ (for instance, the Ricci flow has this property), and our flow, in spite of the frame-indifference of the distance, does not behave in such a nice way.

A similar Otto calculus can be developed for the spherical space $(\Ma, d_{SKB})$. Remark \ref{altsp} guides us to define $$
T_\rhp\Ma:=\left\{\exists U(x)\in\Sm: \Xi=\left[-\nabla(\rhp\dive U)+\rhp U\right]^{Sym}-G\int_{\R^d} G:U\right\}
$$
and
$$
\|\Xi\|_{T_\rhp\Ma}=\left(\int_{\R^d}\rd \rhp\dive U\cdot 
\dive U +\int_{\R^d} \rd \rhp U:U-\left(\int_{\R^d} \rd \rhp:U\right)^2\right)^{1/2}.
$$

The gradients of functionals $\mathcal{F}:\Ma\to\R$ are 
\begin{equation}
 \grad_{SKB}\mathcal{F}(\rhp)=\left[-\nabla\left(\rhp\dive\frac{\delta \mathcal{F}}{\delta\rhp}\right)+\rhp\frac{\delta \mathcal{F}}{\delta\rhp}\right]^{Sym}-G\int_{\R^d} G:\frac{\delta \mathcal{F}}{\delta\rhp}.
\end{equation}

The second order calculus for both the cone and the sphere can be established by formally computing the geodesic equations, which leads to the definitions of Hessians and $\lambda$-convexity.

\begin{remark} 
	The original Otto calculus \cite{otto01} is related to the principal bundle structure of the space of diffeomeorphisms \cite{KLMP,M15,M17} for which the projection is Otto's Riemannian submersion. The base (horizontal space) of this bundle is the Otto-Wasserstein space (at least if we neglect the regularity issue), whereas the incompressible Euler equations (both homogeneous and inhomogeneous)  determine the geodesics on the fibers (vertical spaces) in the spirit of \cite{Ar66}. An analogous bundle construction recently discovered  in \cite{CPSV18,GV18} involves the Hellinger-Kantorovich space as the base and the multidimensional Camassa-Holm equations as the vertical geodesics.  We do not claim that there is a similar neat structure for which the (conic) Kantorovich-Bures space is the horizontal space. However, some weaker construction might exist since there is a natural candidate for the geodesic in the vertical space. 
	The following reasoning is very sloppy. Assume for simplicity that we work on the flat torus (cf. Remark \ref{tor}), and consider the ``vertical tangent vectors'' at $\rhp\simeq I\, \rd x$, where $\rd x$ stands for the canonical volume form on $\mathbb T^d$. 
	These vertical vectors should satisfy $$\left[-\nabla(\rhp u)+\rhp U\right] ^{Sym}=0,$$ that is, $$U=\left(\nabla u\right)^{Sym}.$$ The geodesic Lagrangian \eqref{e:lagr} on this fiber becomes \begin{multline}\int_{(0,1)\times \mathbb T^d} \left(u\cdot u + \left(\nabla u\right)^{Sym}:\left(\nabla u\right)^{Sym}\right) \rd x\, \rd t\\=\int_{(0,1)\times \mathbb T^d} \left(|u|^2 + \frac 1 2 |\nabla u|^2+\frac 1 2 |\dive u|^2 \right) \rd x\, \rd t.\end{multline} This is very similar to the Lagrangians of the EPDiff equations \cite{MM13,EPD05,EPD09,YO10,EPD13,EPD16,kolev} and of the geodesics for the $a$-$b$-$c$ metric \cite{KLMP} on the space of diffeomorphisms. 
\end{remark}

\subsection*{Acknowledgments}
Many of the results were obtained while the authors were participating in the Research in Pairs program at MFO in Oberwolfach, Germany. The authors thank F.-X. Vialard for interesting discussions.  DV was partially supported by the Portuguese Government through FCT/MCTES and by the ERDF through PT2020 (proj. UID/MAT/00324/2019, PTDC/MAT-PUR/28686/2017 and TUBITAK/0005/2014).

\subsection*{Conflict of interest statement} We have no conflict of interest to declare.
\bibliographystyle{abbrv}
\bibliography{biblio} 
\end{document}